\newtheorem{theorem}{Theorem}[section]
\newtheorem{definition}[theorem]{Definition}
\newtheorem{lemma}[theorem]{Lemma}
\newtheorem{remark}[theorem]{Remark}
\numberwithin{equation}{section}
\begin{document}

\title{Weighted estimates for the multilinear maximal function on the upper half-spaces}

\author[chay]{Wei Chen}
\ead{weichen@yzu.edu.cn}
\address[chay]{School of Mathematical Sciences, Yangzhou University, Yangzhou 225002, China}

\author[chay1]{Chunxiang Zhu}
\ead{cxzhu\_yzu@163.com}
\address[chay1]{School of Mathematical Sciences, Yangzhou University, Yangzhou 225002, China}

\begin{abstract}
For a general dyadic grid, we give a Calder\'{o}n-Zygmund type decomposition, which is the principle fact about the multilinear maximal function $\mathfrak{M}$ on the upper half-spaces. Using the decomposition, we study the boundedness of $\mathfrak{M}.$ We obtain a natural extension to the multilinear setting of Muckenhoupt's
weak-type characterization. We also partially obtain characterizations of Muckenhoupt's strong-type inequalities
with one weight.
Assuming the reverse H\"{o}lder's condition, we get a multilinear analogue of Sawyer's two weight theorem. Moreover, we also get Hyt\"{o}nen-P\'{e}rez type weighted estimates.
\end{abstract}

\begin{keyword}Upper half-space, Multilinear maximal function, Weighted inequality, Reverse H\"{o}lder's condition, Hyt\"{o}nen-P\'{e}rez type estimate.
\MSC Primary: 42B25; Secondary: 42B20, 42B35.
\end{keyword}

\maketitle

\section{Introduction}\label{Tntro}
\subsection{Hardy-littlewood maximal function on $\mathbb{R}^n$}\label{Tntro-1}
Let $\mathbb R^n$ be the $n\hbox{-dimensional}$ real Euclidean
space and $f$ a real valued measurable function, the classical Hardy-littlewood maximal function is defined by
$$Mf(x)=\sup\limits_{x\in Q}\frac{1}{|Q|}\int_Q|f(y)|dy,$$
 where $Q$ is a cube with its sides parallel to the coordinate
axes and $|Q|$ is the Lebesgue measure of $Q.$

A weight will be a nonnegative locally integrable function.   
Let $u,~v$ be two weights. Muckenhoupt \cite{Muckenhoupt B} showed that
$$\left\{
  \begin{array}{ll}
    M:L^p(v,\mathbb{R}^n)\rightarrow L^{p,\infty}(u,\mathbb{R}^n)&\hbox{ iff }(u,v)\in A_p, \hbox{ where } p\geq1;\\
    M:L^p(v,\mathbb{R}^n)\rightarrow L^{p}(v,\mathbb{R}^n)&\hbox{ iff }v\in A_p, \hbox{ where } p>1
  \end{array}
\right.$$
Let $p>1,$ Sawyer \cite{Sawyer E T.} gave the
testing condition and characterized the weights for which $M$ is bounded from $L^p(v,\mathbb{R}^n)$ to $L^{p}(u,\mathbb{R}^n).$
Motivated by \cite{Muckenhoupt B, Sawyer E T.},
the theory of weighted inequalities developed rapidly, not only for the Hardy--Littlewood maximal operator but also for some of the main operators in Harmonic Analysis like Calder\'on--Zygmund operators (see \cite{Garcia Rubio}
and \cite{Cruz-Uribe D. J. M. Martell} for more information).

Recently, a large body of literature on the topic of multilinear weighted norm inequalities appeared. This study is based on multiple simultaneous
decompositions and is naturally more complicated than its linear counterpart,
but is also more far-reaching and yields more flexible results. Weighted estimates for the maximal operator $\prod_{j=1}^m Mf_j$ ($m$-fold product of $M$) in the multilinear setting were studied in \cite{GT} and \cite{PT}.
The new multilinear maximal function
\begin{equation*}\label{multi_maximal_operator}\mathcal{M}(f_1,...,f_m)(x) := \sup\limits_{x\in Q}
\prod\limits_{i=1}\limits^{m}\frac{1}{|Q|}\int_Q|f_i(y_i)|dy_i, \quad x\in \mathbb R^n
\end{equation*}
associated with cubes with sides parallel to the coordinate
axes was first defined and the corresponding weight theory was studied in \cite{Lerner A.K. Ombrosi S.}.
The importance of this operator is that it is strictly smaller than the $m$-fold product of $M$. Moreover, it generalizes the Hardy--Littlewood
maximal function (case $m=1$) and in several ways it controls the class
of multilinear Calder\'{o}n--Zygmund operators as shown in \cite{Lerner A.K. Ombrosi S.}.
The relevant class of multiple weights for $\mathcal{M}$ is given by the condition $A_{\overrightarrow{p}}$
\cite[Definition 3.5]{Lerner A.K. Ombrosi S.}.
The more general case was extensively discussed in
\cite{Grafakos L. Liu L. G. Perez C. Torres R. H.,Grafakos L. Liu L. G. Yang D. C.}.
Using a dyadic discretization technique, Dami\'{a}n, Lerner and P\'{e}rez \cite{W. Damian}
and Li, Moen and Sun \cite{Li Moen Sun} proved some sharp weighted norm inequalities for the multilinear maximal operator $\mathcal M.$
In order to establish the generalization of Sawyer's theorem to the multilinear setting, Chen and Dami\'{a}n \cite{Chen-Damian} introduced
a reverse H\"{o}lder's condition $RH_{\overrightarrow{p}}$ on the weights and established the multilinear version of Sawyer's result; however the method do not work without $RH_{\overrightarrow{p}}.$
Later on, the condition $RH_{\overrightarrow{p}}$ was used in \cite{cao-xue,cao-xue1,CL1,Cruz-Moen,Sehba}. Recently, Cruz-Uribe and Moen \cite{Cruz-Moen} proved a multilinear version of the reverse H\"{o}lder's inequality
in the theory of Muckenhoupt $A_p$ weights. 
In our opinion, it is difficult to establish the multilinear version of Sawyer's result without any assumptions. In fact, we also found that Li, Xue and Yan \cite{W. M. Li} introduced a kind of monotone property and established the multilinear version of Sawyer's result.
Note that if $v=\prod_{i=1}^m\omega_i^{{p}/{p_i}},$ then the condition
$(v,\overrightarrow{\omega})\in A_{\overrightarrow{p}}$ implies the reverse H\"{o}lder's condition $\overrightarrow{\omega}\in RH_{\overrightarrow{p}}$ \cite[Proposition 2.3]{cao-xue}. 
In addition, Chen and Dami\'{a}n investigated a bound $B_{\overrightarrow{p}}$ \cite[Theorem 2]{Chen-Damian} and a mixed
bound $A_{\overrightarrow{p}}-W^{\infty}_{\overrightarrow{p}}$ \cite[Theorem 3]{Chen-Damian} for the multilinear maximal operator, which are the multilinear versions of Hyt\"{o}nen-P\'{e}rez type weighted estimates \cite[Theorem 4.3]{T. Hytonen}.

\subsection{Maximal function on the upper half-space $\mathbb{R}_{+}^{n+1}$}\label{Tntro-2}
Given a function $f$ on $\mathbb{R}^n,$ we define a
maximal function on the upper half-space $\mathbb{R}_{+}^{n+1}=\{(x,t)\,:\,x\in\mathbb{R}^{n},t\geq0\}$
by setting
$$
\widetilde{M}f(x,t)=\sup_{x\in Q,\,l(Q)\geq t}\frac{1}{|Q|}\int_{Q}|f(x)|dx,
$$
where $Q$ is a cube with its sides parallel to the coordinate
axes and $|Q|$ is the Lebesgue measure of $Q.$
The maximal function controls the Poisson integral
$$
Pf(x,t)=\int_{\mathbb{R}^{n}}f(y)P(x-y,t)dy\quad x\in\mathbb{R}^{n},\,t\geq0
$$
where
$$
P(x,t)=c_{n}\frac{t}{\left(|x|^{2}+t^{2}\right)^{\frac{n+1}{2}}}
$$
is the Poisson Kernel.

Let $\mu$ be a measure on $\mathbb{R}^{n+1}_+$ and $v$ a
weight on $\mathbb{R}^n.$ Carleson \cite{Carleson} characterized the positive Borel
measures $\mu$ on $\mathbb{R}_{+}^{n+1}$ such that $\mathcal{M}$ is of
strong type $(p,p)$ for $p>1$ and of weak type $(1,1)$. Later on, Fefferman and Stein \cite{FS}
found a condition on the pair
$(\mu,v)$ to be sufficient for the boundedness of the maximal operator $\widetilde{M}$ from
$L^p(\mathbb{R}^n,v)$ into $L^p(\mathbb{R}^{n+1}_+,\mu)$ for $p>1$ and from
$L^1(\mathbb{R}^n,v)$ into $L^{1,\infty}(\mathbb{R}^{n+1}_+,\mu).$
Let $p>1,$ Ruiz \cite{Ruiz}
and Ruiz and Torrea \cite{Ruiz2} obtained the exact conditions on the pair
$(\mu,v)$ for maximal operator $\widetilde{M}$ to be a bounded operator from
$L^p(\mathbb{R}^n,v)$ into $L^{p,\infty}(\mathbb{R}^{n+1}_+,\mu)$
and from $L^p(\mathbb{R}^n,v)$ into $L^p(\mathbb{R}^{n+1}_+,\mu),$ respectively.
In \cite{P. Byung-Oh} and \cite{J. Garcia-Cuerva}, the analogues of the above
results have been developed in spaces of $\hbox{(non-)}$homogeneous
type. In addition, the operator $\widetilde{M}$ can be defined in martingale space, and
the weighted inequalities also have their martingale
versions \cite{W. Chen}. Recently, Rivera-R\'{i}os \cite{Ri}
studied quantitative versions of weighted estimates obtained by Ruiz \cite{Ruiz}
and Ruiz and Torrea \cite{Ruiz2}.

The aim of this paper is to give some multilinear analogues of
the above mentioned results for the maximal function on the upper half-space $\mathbb{R}_{+}^{n+1}.$ Given $\overrightarrow f=(f_1,\dots,f_m)$, we define the multilinear maximal operator $\mathfrak{M}$ on the upper half-space $\mathbb{R}_{+}^{n+1}$ by $$\mathfrak{M}(\overrightarrow f\,)(x,t)=\sup_{x\in Q,\,l(Q)\geq t}\prod_{i=1}^m\frac{1}{|Q|}\int_Q|f_i(y_i)|dy_i$$
where $Q$ is a cube with its sides parallel to the coordinate
axes and $|Q|$ is the Lebesgue measure of $Q.$
We provide some weighted estimates for $\mathfrak{M}.$
Our approaches are mainly based on a Calder\'{o}n-Zygmund type decomposition suited to the multilinear setting and the multilinear Carleson embedding theorem \cite[Lemma 3]{Chen-Damian}.

Firstly, we obtain a natural extension to the multilinear setting of Muckenhoupt's
weak-type characterization. All unexplained notations can be found in Section \ref{sec2}.

\begin{theorem}\label{theorem_Ap} Let $\overrightarrow{P}=(p_1,\cdots,p_m)$ with $1< p_1,\cdots,p_m<\infty$ and $1/{p_1}+\cdots+1/{p_m}=1/p$.
Let $\mu$ be a Borel measure
on $\mathbb{R}_{+}^{n+1}.$ Let $\omega_1,\cdot\cdot\cdot,\omega_m$ be weights in $\mathbb{R}^{n}.$ Then the following statements
are equivalent:
  \begin{enumerate}[\rm(1)]
    \item \label{theorem_Ap_3}$(\mu,\overrightarrow{w})$ satisfies the multilinear $A'_{\overrightarrow{P}}$ condition;
    \item \label{theorem_Ap_1} There exists a positive constant $C$ such that
        $$\mu(\widetilde{Q})^{\frac{1}{p}}\prod\limits_{i=1}^{m}\left(\frac{1}{|Q|}\int_Qf_i(y)dy\right)\leq
        C\prod\limits^{m}_{i=1}\|f_i\chi_Q\|_{L^{p_i}(\mathbb{R}^n,\omega_i)},$$
    for any $\overrightarrow{f}\in \prod\limits_{i=1}^m{L^{p_i}(\mathbb{R}^n,\omega_i)}$ and any cube $Q$ in $\mathbb{R}^n;$
    \item \label{theorem_Ap_2} There exists a positive constant $C$ such that
       $$\lambda \mu\left(\{(x,t)\in\mathbb{R}_+^{n+1}:\mathfrak{M}(\overrightarrow{f})\geq\lambda\}\right)^{\frac{1}{p}}\leq C\prod\limits^{m}_{i=1}\|f_i\|_{L^{p_i}(\mathbb{R}^n,\omega_i)},$$
     for any $\overrightarrow{f}\in \prod\limits_{i=1}^{m}{L^{p_i}(\mathbb{R}^n,\omega_i)}$ and $\lambda>0;$
    \item \label{theorem_Ap_21} There exists a positive constant $C$ such that
       $$\lambda \mu\left(\{(x,t)\in\mathbb{R}_+^{n+1}:\mathfrak{M}(\overrightarrow{f})>\lambda\}\right)^{\frac{1}{p}}\leq C\prod\limits^{m}_{i=1}\|f_i\|_{L^{p_i}(\mathbb{R}^n,\omega_i)},$$
     for any $\overrightarrow{f}\in \prod\limits_{i=1}^{m}{L^{p_i}(\mathbb{R}^n,\omega_i)}$ and $\lambda>0.$
  \end{enumerate}
  Moreover, if we denote the smallest constants $C$ in \eqref{theorem_Ap_1}, \eqref{theorem_Ap_2} and \eqref{theorem_Ap_21}
by $[v,\overrightarrow{\omega}]'_{A'_{\overrightarrow{p}}},$ $\|\mathfrak{M}\|'$ and $\|\mathfrak{M}\|,$ respectively,
then we have
$$[v,\overrightarrow{\omega}]_{A'_{\overrightarrow{p}}}=[v,\overrightarrow{\omega}]'_{A'_{\overrightarrow{p}}},$$
$$\|\mathfrak{M}\|'=\|\mathfrak{M}\|,$$
$$[v,\overrightarrow{\omega}]'_{A'_{\overrightarrow{p}}}\leq\|\mathfrak{M}\|'$$
and $$\|\mathfrak{M}\|\lesssim [v,\overrightarrow{\omega}]'_{A_{\overrightarrow{p}}}.$$
  \end{theorem}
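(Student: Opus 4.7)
The plan is to close the cycle $(1)\Leftrightarrow(2)$, $(3)\Leftrightarrow(4)$, $(3)\Rightarrow(2)$, $(2)\Rightarrow(4)$, with careful constant tracking. The equivalence $(1)\Leftrightarrow(2)$ and the identity $[v,\overrightarrow{\omega}]_{A'_{\overrightarrow{p}}}=[v,\overrightarrow{\omega}]'_{A'_{\overrightarrow{p}}}$ are a duality statement: one direction is Hölder's inequality applied in (2), and the other comes from plugging the sharp test functions $f_i=\omega_i^{1-p_i'}\chi_Q$ into (2). A short computation $(1-p_i')p_i+1=1-p_i'$ converts the $L^{p_i}(\omega_i)$ norm into $\bigl(\int_Q \omega_i^{1-p_i'}\bigr)^{1/p_i}$ and recovers the $A'_{\overrightarrow{p}}$ supremum exactly. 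The equivalence $(3)\Leftrightarrow(4)$ is routine: $(3)\Rightarrow(4)$ is immediate from $\{\mathfrak{M}(\overrightarrow f\,)>\lambda\}\subset\{\mathfrak{M}(\overrightarrow f\,)\ge\lambda\}$, and $(4)\Rightarrow(3)$ follows by applying (4) at $\lambda'<\lambda$ and letting $\lambda'\uparrow\lambda$, giving $\|\mathfrak{M}\|'=\|\mathfrak{M}\|$.

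For $(3)\Rightarrow(2)$, which yields $[v,\overrightarrow{\omega}]'_{A'_{\overrightarrow{p}}}\leq\|\mathfrak{M}\|'$, I would fix a cube $Q$ and nonnegative $f_i$ supported on $Q$, and set $a:=\prod_{i=1}^m|Q|^{-1}\int_Q f_i$. The cube $Q$ is admissible in the supremum defining $\mathfrak{M}(\overrightarrow f\,)(x,t)$ for every $(x,t)$ in the tent $\widetilde Q$, so $\widetilde Q\subset\{\mathfrak{M}(\overrightarrow f\,)\ge a\}$, and (3) applied at $\lambda=a$ recovers (2) directly.

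The main step is $(2)\Rightarrow(4)$, which I plan to carry out via the Calderón-Zygmund decomposition announced as the principal fact of the paper. Fix $\lambda>0$ and $\overrightarrow f$, and write $E_\lambda=\{(x,t)\in\mathbb R^{n+1}_+:\mathfrak{M}(\overrightarrow f\,)(x,t)>\lambda\}$. By the definition of $\mathfrak{M}$, $E_\lambda$ is the union of tents $\widetilde Q$ over cubes $Q$ with $\prod_i|Q|^{-1}\int_Q|f_i|>\lambda$. The three-lattice / general-dyadic-grid trick lets us replace each such $Q$ by a comparable dyadic cube in one of finitely many shifted grids $\{\mathscr D_\alpha\}$, and extract in each $\mathscr D_\alpha$ the maximal dyadic cubes $\{Q_k^\alpha\}$ satisfying $\prod_i|Q_k^\alpha|^{-1}\int_{Q_k^\alpha}|f_i|>\lambda$; these are pairwise disjoint within $\mathscr D_\alpha$, and $E_\lambda\subset\bigcup_\alpha\bigcup_k \widetilde{Q_k^\alpha}$. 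Applying (2) to each $Q_k^\alpha$, raising to the $p$-th power and summing, then using multilinear Hölder (conjugate exponents $p_i/p$, so that the weights $p/p_i$ sum to $1$) together with the disjointness of the $Q_k^\alpha$,
\[
\sum_k\prod_{i=1}^m\|f_i\chi_{Q_k^\alpha}\|_{L^{p_i}(\omega_i)}^p\leq\prod_{i=1}^m\Bigl(\sum_k\|f_i\chi_{Q_k^\alpha}\|_{L^{p_i}(\omega_i)}^{p_i}\Bigr)^{p/p_i}\leq\prod_{i=1}^m\|f_i\|_{L^{p_i}(\omega_i)}^p,
\]
delivers $\lambda\,\mu(E_\lambda)^{1/p}\lesssim[v,\overrightarrow{\omega}]'_{A'_{\overrightarrow{p}}}\prod_i\|f_i\|_{L^{p_i}(\omega_i)}$, which is exactly (4).

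The main obstacle is the covering $E_\lambda\subset\bigcup_\alpha\bigcup_k\widetilde{Q_k^\alpha}$ by tents over disjoint cubes. In the classical $\mathbb R^n$ setting the analogous disjointness drops out of the standard Calderón-Zygmund construction automatically, but here the admissibility constraint $l(Q)\ge t$ couples the cube to the time variable, and the tent geometry in $\mathbb R^{n+1}_+$ must be reconciled with the dyadic stopping-time argument. The multilinear Calderón-Zygmund decomposition developed in the paper is precisely what legitimises this reconciliation; once it is in place, the multilinear Hölder step above, the passage from $>$ to $\ge$, and the sharp-constant bookkeeping are routine.
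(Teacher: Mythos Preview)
Your proposal is correct and follows essentially the same scheme as the paper: the implications $(1)\Leftrightarrow(2)$ via H\"older and test functions, $(3)\Leftrightarrow(4)$ trivially, $(3)\Rightarrow(2)$ via the tent inclusion $\widetilde Q\subset\{\mathfrak{M}(\overrightarrow{f\chi_Q})\ge a\}$, and $(2)\Rightarrow(4)$ by reduction to dyadic grids, the Calder\'on--Zygmund covering (Lemma~\ref{Lemma:CZ}) of the superlevel set by tents over disjoint maximal cubes, and multilinear H\"older to sum the resulting localized bounds. The only cosmetic difference is that the paper first passes to $\mathfrak{M}^{\mathfrak D}$ and then stops at the dyadic level $a^k\le\lambda<a^{k+1}$, whereas you describe stopping directly at level $\lambda$ after the three-lattice replacement; in your phrasing the replacement cubes satisfy the average condition only at level $\lambda/6^{mn}$ rather than $\lambda$, but this is absorbed into the implicit constant and does not affect the argument.
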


Secondly, we partially obtain characterizations of Muckenhoupt's strong-type inequalities
with one weight. There are two different versions. The first is Theorem \ref{thm:m} and the other is Theorem \ref{prop s}.

\begin{theorem}\label{thm:m}
Suppose $1<p_1,\ldots,p_m<\infty$, $1/p=1/{p_1}+\cdots+1/{p_m}$, $\overrightarrow{w}\in A'_{\overrightarrow P}$
and $(\mu,v_{\overrightarrow{w}})\in C_0.$  Then
\begin{equation}\label{eq:emm}
  \|\mathfrak{M}(\overrightarrow{f})\|_{L^p(\mathbb{R}_+^{n+1},\mu)}\lesssim [\mu,v_{\overrightarrow{w}}]^{1/p}_{ C_0}
[\overrightarrow{w}]^{\bar{p}}_{A'_{\overrightarrow P}}\prod_{i=1}^m \|f_i\|_{L^{p_i}(\mathbb{R}^n,w_i)},
\end{equation}
where $\bar{p}=\max\{p_1',\cdots,p_m'\}.$
\end{theorem}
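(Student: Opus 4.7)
The plan is to combine the Calder\'on--Zygmund-type decomposition promised in the abstract with the multilinear Carleson embedding theorem \cite[Lemma 3]{Chen-Damian}, so that the proof reduces to a Carleson packing condition that follows directly from the $C_0$ hypothesis.

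First, by the standard $3^n$ shifted dyadic grids trick I would replace $\mathfrak{M}$ pointwise by the supremum of its dyadic variants $\mathfrak{M}^{\mathscr{D}}$, losing only a universal constant; it then suffices to control $\mathfrak{M}^{\mathscr{D}}$ for a fixed general dyadic grid $\mathscr{D}$. Applying the paper's CZ-type decomposition at heights $\lambda^k$ (with $\lambda>1$ fixed, say $\lambda=2^{m+1}$) produces, for each $k\in\mathbb{Z}$, a pairwise disjoint family of stopping cubes $\{Q_j^k\}\subset\mathscr{D}$ enjoying (i) $\Omega_k:=\{(x,t)\in\mathbb{R}_+^{n+1}:\mathfrak{M}^{\mathscr{D}}(\overrightarrow{f}\,)(x,t)>\lambda^k\}=\bigsqcup_j\widetilde{Q_j^k}$, (ii) $\lambda^k<\prod_{i=1}^m\frac{1}{|Q_j^k|}\int_{Q_j^k}|f_i|\leq\lambda^{k+1}$, and (iii) the remnants $E_j^k:=Q_j^k\setminus\bigcup_\ell Q_\ell^{k+1}$ are pairwise disjoint in $\mathbb{R}^n$. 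Let $\mathcal{S}:=\{Q_j^k\}_{k,j}$ be the resulting sparse family, and write $\widetilde{E_Q}:=\widetilde{Q}\setminus\bigcup_{Q'\in\mathcal{S},\,Q'\subsetneq Q}\widetilde{Q'}$, so that the sets $\widetilde{E_Q}$ partition $\bigcup_k\Omega_k$ up to a $\mu$-null set.

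Second, the layer-cake decomposition $\mathbb{R}_+^{n+1}=\bigsqcup_k(\Omega_k\setminus\Omega_{k+1})$ combined with (i)--(iii) gives
\begin{align*}
\|\mathfrak{M}^{\mathscr{D}}(\overrightarrow{f}\,)\|_{L^p(\mu)}^p
&\leq\sum_k\lambda^{(k+1)p}\,\mu(\Omega_k\setminus\Omega_{k+1})\\
&\lesssim\sum_{Q\in\mathcal{S}}\Bigl(\prod_{i=1}^m\tfrac{1}{|Q|}\int_Q|f_i|\Bigr)^{p}\mu(\widetilde{E_Q}).
\end{align*}
For every $R\in\mathscr{D}$, the disjointness of $\{\widetilde{E_Q}\}$ inside $\widetilde{R}$ together with the $C_0$ hypothesis yields
$$\sum_{Q\in\mathcal{S},\,Q\subseteq R}\mu(\widetilde{E_Q})\leq\mu(\widetilde{R})\leq[\mu,v_{\overrightarrow{w}}]_{C_0}\,v_{\overrightarrow{w}}(R),$$
so $\{\mu(\widetilde{E_Q})\}_{Q\in\mathcal{S}}$ is a Carleson sequence with respect to $v_{\overrightarrow{w}}$ of constant at most $[\mu,v_{\overrightarrow{w}}]_{C_0}$. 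Feeding this sequence, together with the assumption $\overrightarrow{w}\in A'_{\overrightarrow P}$, into the multilinear Carleson embedding \cite[Lemma 3]{Chen-Damian} delivers
$$\Bigl(\sum_{Q\in\mathcal{S}}\Bigl(\prod_i\tfrac{1}{|Q|}\int_Q|f_i|\Bigr)^p\mu(\widetilde{E_Q})\Bigr)^{1/p}\lesssim[\mu,v_{\overrightarrow{w}}]^{1/p}_{C_0}\,[\overrightarrow{w}]^{\bar{p}}_{A'_{\overrightarrow P}}\prod_{i=1}^m\|f_i\|_{L^{p_i}(w_i)},$$
with $\bar{p}=\max_i p_i'$ being the sharp exponent in that embedding (as in the $B_{\overrightarrow{p}}$ bound of \cite[Theorem 2]{Chen-Damian}); combining with the previous display yields \eqref{eq:emm}.

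The main obstacle I anticipate is property (i), namely that the whole level set $\{\mathfrak{M}^{\mathscr{D}}(\overrightarrow{f}\,)>\lambda^k\}$ in $\mathbb{R}_+^{n+1}$ equals the disjoint union of the Carleson boxes $\widetilde{Q_j^k}$ over the stopping cubes. This ``tilde'' structure is what distinguishes the upper-half space operator from its classical counterpart, and it is exactly where the restriction $l(Q)\geq t$ in the definition of $\mathfrak{M}$ must be used crucially---precisely what the paper's CZ-type decomposition is designed to supply. Once (i) is in hand, the absorption of the $A'_{\overrightarrow P}$ constant into the Carleson embedding is mechanical and the $\bar{p}$ exponent presents no extra difficulty.
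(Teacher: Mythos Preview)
There is a fatal error: you have the $C_0$ condition backwards. By Definition~\ref{def_C_0}, $(\mu,v_{\overrightarrow{w}})\in C_0$ means $v_{\overrightarrow{w}}(Q)\le [\mu,v_{\overrightarrow{w}}]_{C_0}\,\mu(\widetilde{Q})$ for every cube $Q$; that is, $\mu(\widetilde{Q})$ controls $v_{\overrightarrow{w}}(Q)$ from above, not the reverse. The inequality you write, $\mu(\widetilde{R})\le [\mu,v_{\overrightarrow{w}}]_{C_0}\,v_{\overrightarrow{w}}(R)$, is precisely the $C_\infty$ condition of Definition~\ref{def_C_infty}, which is the hypothesis of Theorem~\ref{prop s}, not of the present theorem. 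With $C_0$ oriented correctly, the packing estimate $\sum_{Q\subseteq R}\mu(\widetilde{E_Q})\le \mu(\widetilde{R})\lesssim v_{\overrightarrow{w}}(R)$ simply fails, and the Carleson-embedding route collapses.

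The paper's argument uses $C_0$ in the opposite direction. After reducing to the sparse sum $\sum_{Q\in\mathfrak{S}}\mu(\widetilde{Q})\bigl(\prod_i|Q|^{-1}\int_Qf_i\sigma_i\bigr)^p$, one multiplies and divides by $\mu(\widetilde{Q})^{p_1'-1}\prod_i\sigma_i(Q)^{pp_1'/p_i'}\,|Q|^{-mp(p_1'-1)}$ (where $p_1'=\bar p$), so that the $A_{\overrightarrow P}$ constant factors out with exponent $p\bar p$ and what remains carries $\mu(\widetilde{Q})^{p_1'-1}$ in the \emph{denominator}. Now $C_0$ enters correctly: the H\"older factorization $|E_Q|\le v_{\overrightarrow{w}}(E_Q)^{1/(mp)}\prod_i\sigma_i(E_Q)^{1/(mp_i')}$ together with $v_{\overrightarrow{w}}(E_Q)\le v_{\overrightarrow{w}}(Q)\le [\mu,v_{\overrightarrow{w}}]_{C_0}\,\mu(\widetilde{Q})$ cancels that denominator and reduces the sum to $\prod_i\|M_{\sigma_i}^{\mathfrak{D}}f_i\|_{L^{p_i}(\sigma_i)}^p$. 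No Carleson embedding is invoked. As a secondary remark, even had the packing held, Lemma~\ref{lem_Carleson_thm} is formulated with $\sigma_i$-averages and packing against $\prod_i\sigma_i^{p/p_i}$, not Lebesgue averages against $v_{\overrightarrow{w}}=\prod_i w_i^{p/p_i}$; it does not by itself deliver the factor $[\overrightarrow{w}]_{A_{\overrightarrow P}}^{\bar p}$, so that step would still need justification.
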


\begin{theorem}\label{prop s}
Suppose $1<p_1,\ldots,p_m<\infty$, $1/p=1/{p_1}+\cdots+1/{p_m}$, $\overrightarrow{w}\in A_{\overrightarrow P}$
and $(\mu,v_{\overrightarrow{w}})\in C_{\infty}.$  Then
\begin{equation*}
  \|\mathfrak{M}(\overrightarrow{f})\|_{L^p(\mathbb{R}_+^{n+1},\mu)}\lesssim [\mu,v_{\overrightarrow{w}}]^{1/p}_{ C_\infty}
[\overrightarrow{w}]^{\bar{p}}_{A_{\overrightarrow P}}\prod_{i=1}^m \|f_i\|_{L^{p_i}(\mathbb{R}^n,w_i)},
\end{equation*}
where $\bar{p}=\max\{p_1',\cdots,p_m'\}.$
\end{theorem}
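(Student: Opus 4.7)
The plan is to adapt the dyadic principal-cube scheme underlying Theorem \ref{thm:m}, now with the pair $(A'_{\overrightarrow{P}}, C_0)$ replaced by $(A_{\overrightarrow{P}}, C_\infty)$. Since $\mathfrak{M}$ is pointwise dominated, up to a dimensional constant, by the sum of the dyadic analogues $\mathfrak{M}_{\mathscr{D}}$ built from the $3^n$ standard dyadic grids, it is enough to estimate $\|\mathfrak{M}_{\mathscr{D}}(\overrightarrow f\,)\|_{L^p(\mu)}$ for one generic dyadic grid $\mathscr{D}$. Fixing a geometric factor $a>1$ and level sets $\Omega_k=\{\mathfrak{M}_{\mathscr{D}}(\overrightarrow f\,)>a^k\}$, the Calder\'on--Zygmund type decomposition on $\mathbb{R}_+^{n+1}$ from Section \ref{sec2} produces maximal dyadic cubes $\{Q_k^j\}$ and essentially disjoint generalized Carleson boxes $E_k^j := \widetilde{Q_k^j}\setminus \widetilde{\Omega_{k+1}}$ on which $\mathfrak{M}_{\mathscr{D}}(\overrightarrow f\,)\asymp a^k$, reducing the problem to
$$\|\mathfrak{M}_{\mathscr{D}}(\overrightarrow f\,)\|_{L^p(\mu)}^p \lesssim \sum_{k,j}\Bigl(\prod_{i=1}^m \frac{1}{|Q_k^j|}\int_{Q_k^j}|f_i|\,dy\Bigr)^{\!p}\mu(E_k^j).$$

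Second, I would bound the product of averages on each $Q_k^j$ by H\"older against the dual weights $\sigma_i:=w_i^{1-p_i'}$ (which lie in $A_\infty$ because $\overrightarrow w\in A_{\overrightarrow{P}}$), and then invoke the $A_{\overrightarrow{P}}$ condition to produce a factor $v_{\overrightarrow w}(Q_k^j)^{-1}$ together with $\sigma_i$-averages of $|f_i|^{p_i}w_i$. To promote the direct first-power bound to the exponent $\bar p=\max_i p_i'$ claimed in the theorem, I would use the reverse-H\"older self-improvement of each $\sigma_i$ and rearrange the resulting mixed $A_{\overrightarrow{P}}$--$A_\infty$ bound in the Hyt\"onen--P\'erez style of \cite[Theorems 2--3]{Chen-Damian}. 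The sum then takes the Carleson-embedding form
$$\sum_{k,j}\prod_{i=1}^m\Bigl(\frac{1}{\sigma_i(Q_k^j)}\int_{Q_k^j}|f_i|^{p_i}w_i\,dy\Bigr)^{\!p/p_i}\alpha_{k,j},$$
with coefficients $\alpha_{k,j}$ built out of $\mu(E_k^j)$ and $v_{\overrightarrow w}(Q_k^j)$. The $C_\infty$ hypothesis says precisely that $\{\alpha_{k,j}\}$ is a $v_{\overrightarrow w}$-Carleson sequence with constant $[\mu,v_{\overrightarrow w}]_{C_\infty}$, so the multilinear Carleson embedding theorem \cite[Lemma 3]{Chen-Damian} closes the estimate.

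The main obstacle is producing the exponent $\bar p$ on $[\overrightarrow w]_{A_{\overrightarrow{P}}}$: the direct chain of H\"older and $A_{\overrightarrow{P}}$ yields only a single power, and one has to carefully calibrate the reverse-H\"older gains $\varepsilon_i$ on the $m$ dual weights so that, after re-expressing $[\sigma_i]_{A_\infty}$ in terms of $[\overrightarrow w]_{A_{\overrightarrow{P}}}$, the cumulative exponent is exactly $\max_i p_i'$. A secondary subtlety is the matching between the unprimed class $A_{\overrightarrow{P}}$ and the Carleson class $C_\infty$: the packing of stopping cubes produced by the $A_{\overrightarrow{P}}$-based bound is heavier than what $C_0$ could absorb, and this is precisely why the stronger condition $C_\infty$ appears here in place of the $C_0$ used in Theorem \ref{thm:m}.
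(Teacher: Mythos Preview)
Your approach differs substantially from the paper's and has a real gap. The paper does not run the Carleson-embedding machinery here at all. Instead it observes that the Calder\'on--Zygmund decomposition identifies the \emph{same} family of maximal cubes $\{Q_j^k\}$ for both $\mathfrak{M}^{\mathfrak{D}}$ on $\mathbb{R}^{n+1}_+$ and $\mathcal{M}^{\mathfrak{D}}$ on $\mathbb{R}^n$; the $C_\infty$ condition applied cube by cube gives the level-set comparison
\[
\mu\bigl(\{\mathfrak{M}^{\mathfrak{D}}\overrightarrow{f}>a^k\}\bigr)=\sum_j\mu(\widetilde{Q_j^k})\le[\mu,v_{\overrightarrow{w}}]_{C_\infty}\sum_j v_{\overrightarrow{w}}(Q_j^k)=[\mu,v_{\overrightarrow{w}}]_{C_\infty}\,v_{\overrightarrow{w}}\bigl(\{\mathcal{M}^{\mathfrak{D}}\overrightarrow{f}>a^k\}\bigr),
\]
which, after integrating in $\lambda$, yields $\|\mathfrak{M}^{\mathfrak{D}}(\overrightarrow{f})\|_{L^p(\mu)}\lesssim[\mu,v_{\overrightarrow{w}}]_{C_\infty}^{1/p}\|\mathcal{M}^{\mathfrak{D}}(\overrightarrow{f})\|_{L^p(v_{\overrightarrow{w}})}$. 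The exponent $\bar p$ then comes for free by quoting the sharp bound of Li--Moen--Sun \cite{Li Moen Sun} for $\mathcal{M}$ on $\mathbb{R}^n$. No reverse H\"older calibration is needed.

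Your route, by contrast, tries to reproduce the sharp exponent from scratch inside the half-space argument, and this is where it breaks down. First, the Carleson-embedding form you write, with averages $\sigma_i(Q)^{-1}\int_Q|f_i|^{p_i}w_i$, does not match the hypothesis of Lemma~\ref{lem_Carleson_thm}, which requires $\sigma_i$-averages of the functions themselves (after the substitution $f_i\mapsto f_i\sigma_i$). Second, and more seriously, the $C_\infty$ condition controls $\mu(\widetilde{Q})$ by $v_{\overrightarrow{w}}(Q)$, whereas Lemma~\ref{lem_Carleson_thm} demands the Carleson packing against $\int_R\prod_i\sigma_i^{p/p_i}$; these are dual quantities, and passing from one to the other is exactly the content of $RH_{\overrightarrow p}$ or $W^\infty_{\overrightarrow p}$. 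Once you invoke those, you land in the setting of Theorem~\ref{theorem_bi A Fujii}, which gives the bound $[\mu,\overrightarrow{w}]_{A'_{\overrightarrow p}}[\overrightarrow{w}]_{W^\infty_{\overrightarrow p}}^{1/p}$---not $[\overrightarrow{w}]_{A_{\overrightarrow P}}^{\bar p}$. Your proposed ``calibration of the reverse-H\"older gains $\varepsilon_i$'' to recover $\bar p$ is precisely the hard part of \cite{Li Moen Sun}, and you have not indicated how to carry it out; the paper avoids this entirely by outsourcing it.
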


Thirdly, assuming the reverse H\"{o}lder's condition, we get a multilinear analogue of Sawyer's two weight theorem.

\begin{theorem}\label{theorem_Sp}Suppose $1<p_1,\ldots,p_m<\infty$, $1/p=1/{p_1}+\cdots+1/{p_m}.$
If $(\omega_1, ~\omega_2,\cdot\cdot\cdot,~\omega_m)\in RH_{\overrightarrow{p}},$
then the following statements
are equivalent:\begin{enumerate}[\rm(1)]
\item \label{thm A 1}There exists a positive constant $C$ such that
\begin{equation*}
\|\mathfrak{M}(\overrightarrow{f})\|_{L^p(\mathbb{R}_+^{n+1},\mu)}\leq
C\prod\limits^m_{i=1}\|f_i\|_{L^{p_i}(\mathbb{R}^n,\omega_i)},
~\forall f_i\in L^{p_i}(\mathbb{R}^n,\omega_i);
\end{equation*}
\item \label{thm A 2} There exists a positive constant $C$ such that
\begin{equation}
\label{Th_change}\|\mathfrak{M}(\overrightarrow{f\sigma})\|_{L^p(\mathbb{R}_+^{n+1},\mu)}\leq
C\prod\limits^m_{i=1}\|f_i\|_{L^{p_i}(\mathbb{R}^n,\sigma_i)},
~\forall f_i\in L^{p_i}(\mathbb{R}^n,\sigma_i);
\end{equation}
\item $(\mu,\overrightarrow{\omega})$
satisfies the condition $S_{\overrightarrow{p}}.$
\end{enumerate}
Moreover, we denote the smallest constants $C$ in (\ref{thm A 1})
and (\ref{thm A 2}) by $\|\mathfrak{M}\|$ and $\|\mathfrak{M}\|',$ respectively.
Then it follows that
$$[v,\overrightarrow{\omega}]_{S_{\overrightarrow{p}}}
\leq\|\mathfrak{M}\|=\|\mathfrak{M}\|'\lesssim
[\overrightarrow{\omega}]_{RH_{\overrightarrow{p}}}^{\frac{1}{p}}[v,\overrightarrow{\omega}]_{S_{\overrightarrow{p}}}.$$
\end{theorem}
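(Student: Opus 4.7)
The plan is to establish the cycle $(1)\Rightarrow(3)\Rightarrow(2)\Rightarrow(1)$ while tracking constants. The equivalence $(1)\Leftrightarrow(2)$ is purely formal after introducing the dual weights $\sigma_i:=\omega_i^{1-p_i'}$: the substitution $f_i=g_i\sigma_i$ yields $\|g_i\sigma_i\|_{L^{p_i}(\omega_i)}=\|g_i\|_{L^{p_i}(\sigma_i)}$, so the two strong-type bounds are interchangeable with identical constants and $\|\mathfrak{M}\|=\|\mathfrak{M}\|'$; this step does not invoke $RH_{\overrightarrow{p}}$. For $(1)\Rightarrow(3)$, the plan is to test (1) on $f_i=\sigma_i\chi_Q$, use $\|\sigma_i\chi_Q\|_{L^{p_i}(\omega_i)}=\sigma_i(Q)^{1/p_i}$, and note that $\mathfrak{M}(\overrightarrow{\sigma\chi_Q})(x,t)\geq \prod_i\sigma_i(Q)/|Q|$ for $(x,t)\in\widetilde{Q}$, immediately giving $[v,\overrightarrow{\omega}]_{S_{\overrightarrow{p}}}\leq \|\mathfrak{M}\|$.

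The substance of the proof is $(3)\Rightarrow(2)$. First, by the standard adjacent-grid covering one reduces $\mathfrak{M}$ to a finite maximum of dyadic operators $\mathfrak{M}^{\D}$ and fixes a single grid $\D$. For nonnegative, bounded, compactly supported $f_i$, apply the multilinear Calder\'on--Zygmund decomposition developed earlier in the paper at geometric levels $a^k$ (with $a>2^{mn}$), producing a sparse family of principal cubes $\{Q_k^j\}\subset\D$ and pairwise disjoint tents $E_k^j\subset\widetilde{Q_k^j}$ on which
\[
\mathfrak{M}^{\D}(\overrightarrow{f\sigma})\sim a^k\lesssim \prod_{i=1}^m\frac{1}{|Q_k^j|}\int_{Q_k^j}f_i\sigma_i\,dy.
\]
This splits the norm as
\begin{equation*}
\|\mathfrak{M}^{\D}(\overrightarrow{f\sigma})\|_{L^{p}(\bbR_+^{n+1},\mu)}^{p}\lesssim \sum_{k,j}\lambda_{Q_k^j}\prod_{i=1}^{m}\Bigl(\frac{1}{\sigma_i(Q_k^j)}\int_{Q_k^j}f_i\sigma_i\Bigr)^{p},\qquad \lambda_Q:=\Bigl(\prod_{i=1}^{m}\frac{\sigma_i(Q)}{|Q|}\Bigr)^{p}\mu(\widetilde{Q}).
\end{equation*}

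The cube-by-cube estimate comes from the testing condition: since $\mathfrak{M}(\overrightarrow{\sigma\chi_Q})\geq \prod_i\sigma_i(Q)/|Q|$ on $\widetilde{Q}$, hypothesis $S_{\overrightarrow{p}}$ yields $\lambda_Q\leq [v,\overrightarrow{\omega}]_{S_{\overrightarrow{p}}}^{p}\prod_i\sigma_i(Q)^{p/p_i}$. To convert this pointwise bound into a global Carleson mass compatible with Lemma~3 of \cite{Chen-Damian}, one needs $\sum_{Q\subset R,\,Q\text{ principal}}\prod_i\sigma_i(Q)^{p/p_i}\lesssim \prod_i\sigma_i(R)^{p/p_i}$, and this is precisely where the reverse H\"older assumption $\overrightarrow{\omega}\in RH_{\overrightarrow{p}}$ enters, contributing the factor $[\overrightarrow{\omega}]_{RH_{\overrightarrow{p}}}$. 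The multilinear Carleson embedding theorem then delivers
\[
\|\mathfrak{M}^{\D}(\overrightarrow{f\sigma})\|_{L^p(\mu)}\lesssim [\overrightarrow{\omega}]_{RH_{\overrightarrow{p}}}^{1/p}[v,\overrightarrow{\omega}]_{S_{\overrightarrow{p}}}\prod_{i=1}^{m}\|f_i\|_{L^{p_i}(\sigma_i)},
\]
which is the quantitative form asserted in the theorem.

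The principal obstacle is exactly this Carleson-type summation. In the linear case ($m=1$), sparseness of the principal cubes together with the elementary absorption $\sum_{Q\subset R}\sigma(Q)^{p}\lesssim \sigma(R)^{p}$ is automatic, but in the multilinear setting the product of separate $\sigma_i$-measures does not self-absorb without extra structure, and $RH_{\overrightarrow{p}}$ supplies precisely the multilinear $A_\infty$-type inequality needed to close the sum. A secondary technical step is to verify that the tents $E_k^j$ are pairwise disjoint in $\bbR_+^{n+1}$, which is ensured by the geometric choice of $a$ and the nested structure of the principal cubes produced by the decomposition.
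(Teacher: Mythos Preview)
Your overall plan is correct up to and including the dyadic reduction, the sparse decomposition, and the passage to the coefficients $\lambda_Q$. The gap is in how you verify the Carleson hypothesis of the multilinear embedding lemma. You apply the testing condition $S_{\overrightarrow{p}}$ \emph{cube by cube}, obtaining $\lambda_Q\leq [v,\overrightarrow{\omega}]_{S_{\overrightarrow{p}}}^{p}\prod_i\sigma_i(Q)^{p/p_i}$, and then claim that $RH_{\overrightarrow{p}}$ yields
\[
\sum_{Q\in\mathfrak{S},\,Q\subset R}\prod_{i=1}^m\sigma_i(Q)^{p/p_i}\lesssim [\overrightarrow{\omega}]_{RH_{\overrightarrow{p}}}\prod_{i=1}^m\sigma_i(R)^{p/p_i}.
\]
This inequality does not follow from $RH_{\overrightarrow{p}}$. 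The reverse H\"older condition only gives $\prod_i\sigma_i(Q)^{p/p_i}\leq [\overrightarrow{\omega}]_{RH_{\overrightarrow{p}}}\int_Q\prod_i\sigma_i^{p/p_i}$, and the sparse cubes $Q\subset R$ are nested rather than disjoint; sparseness controls Lebesgue measure of the $E_Q$, not the weighted integrals $\int_Q\prod_i\sigma_i^{p/p_i}$. The sum over levels can diverge. (Your parenthetical remark that the linear case $m=1$ is ``automatic'' is likewise incorrect: $\sum_{Q\in\mathfrak{S},\,Q\subset R}\sigma(Q)\lesssim\sigma(R)$ fails for general weights $\sigma$.)

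The paper's argument is organized differently at exactly this point. It keeps the pairwise disjoint tent pieces $\widehat{E_Q}\subset\widetilde{Q}$ in the coefficients, setting $a_Q=\mu(\widehat{E_Q})\bigl(\prod_i\sigma_i(Q)/|Q|\bigr)^p$, and does \emph{not} bound $a_Q$ termwise by the testing condition. Instead, since $\prod_i\sigma_i(Q)/|Q|\leq\mathfrak{M}^{\mathfrak{D}}(\overrightarrow{\sigma\chi_R})$ on $\widehat{E_Q}$ whenever $Q\subset R$, and the $\widehat{E_Q}$ are disjoint subsets of $\widetilde{R}$, one first sums and then applies $S_{\overrightarrow{p}}$ once on $R$:
\[
\sum_{Q\in\mathfrak{S},\,Q\subset R}a_Q\leq\int_{\widetilde{R}}\mathfrak{M}^{\mathfrak{D}}(\overrightarrow{\sigma\chi_R})^{p}\,d\mu\leq [v,\overrightarrow{\omega}]_{S_{\overrightarrow{p}}}^{p}\prod_{i=1}^m\sigma_i(R)^{p/p_i}.
\]
The role of $RH_{\overrightarrow{p}}$ is then only the single-cube conversion $\prod_i\sigma_i(R)^{p/p_i}\leq [\overrightarrow{\omega}]_{RH_{\overrightarrow{p}}}\int_R\prod_i\sigma_i^{p/p_i}$, which is precisely the form demanded by the hypothesis of the Carleson embedding lemma. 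In short: sum first using disjointness of the $\widehat{E_Q}$, test once on $R$, and use $RH_{\overrightarrow{p}}$ only for the final rewriting, not to absorb a sum over sparse cubes.
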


Finally, we give Hyt\"{o}nen-P\'{e}rez type weighted estimates.

\begin{theorem}\label{theorem_Bp}Suppose $1<p_1,\ldots,p_m<\infty$,
$1/p=1/{p_1}+\cdots+1/{p_m}.$
If $(v,\overrightarrow{\omega})\in B'_{\overrightarrow{p}},$ then the following statements
are valid:\begin{enumerate}[\rm(1)]
\item \label{thm B 1}There exists a positive constant $C$ such that
$$
\|\mathfrak{M}(\overrightarrow{f})\|_{L^p(\mathbb{R}^{n+1}_+, \mu)}\leq
C\prod\limits^m_{i=1}\|f_i\|_{L^{p_i}(\mathbb{R}^{n},\omega_i)},
~\forall f_i\in L^{p_i}(\mathbb{R}^{n},\omega_i);
$$
\item \label{thm B 2}There exists a positive constant $C$ such that
$$
\|\mathfrak{M}(\overrightarrow{f\sigma})\|_{L^p(\mathbb{R}^{n+1}_+, \mu)}\leq
C\prod\limits^m_{i=1}\|f_i\|_{L^{p_i}(\mathbb{R}^{n},\sigma_i)},
~\forall f_i\in L^{p_i}(\mathbb{R}^{n},\sigma_i).
$$
\end{enumerate}
Moreover, we denote the smallest constants $C$ in (\ref{thm B 1})
and (\ref{thm B 2}) by $\|\mathfrak{M}\|$ and $\|\mathfrak{M}\|',$ respectively.
Then it follows that $$\|\mathfrak{M}\|=\|\mathfrak{M}\|'\lesssim[\mu,\overrightarrow{\omega}]_{B'_{\overrightarrow{p}}}.$$
\end{theorem}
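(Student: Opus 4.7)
The plan is to prove Theorem~\ref{theorem_Bp} along the lines of the Hyt\"onen-P\'erez argument, adapted to the multilinear upper half-space setting: dyadicise $\mathfrak{M}$, run a Calder\'on-Zygmund type principal-cube decomposition in the upper half-space, and close with the multilinear Carleson embedding theorem \cite[Lemma 3]{Chen-Damian}. A first observation is that (\ref{thm B 1}) and (\ref{thm B 2}) are equivalent with the same best constant: writing $\sigma_i=\omega_i^{1-p_i'}$, the substitution $f_i\leftrightarrow f_i\sigma_i$ preserves the norm since $\|f_i\sigma_i\|_{L^{p_i}(\omega_i)}=\|f_i\|_{L^{p_i}(\sigma_i)}$, and nonnegative $f_i$ and $f_i\sigma_i$ range over the same cone of measurable functions. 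Thus $\|\mathfrak{M}\|=\|\mathfrak{M}\|'$, and it remains to prove (\ref{thm B 2}) with the announced linear dependence on $[\mu,\overrightarrow\omega]_{B'_{\overrightarrow p}}$.

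For the main estimate, one first passes to dyadic models: by the $3^n$-lattice trick, $\mathfrak{M}(\overrightarrow{f\sigma})(x,t)\lesssim\sum_{k=1}^{3^n}\mathfrak{M}^{\mathscr{D}_k}(\overrightarrow{f\sigma})(x,t)$, so it suffices to control the dyadic model operator $\mathfrak{M}^{\mathscr{D}}$ for a single grid $\mathscr{D}$. Applying the Calder\'on-Zygmund type decomposition developed earlier in the paper, one produces a family $\mathcal{F}\subset\mathscr{D}$ of principal (stopping) cubes together with pairwise disjoint regions $E(Q)\subset\widetilde{Q}\subset\mathbb{R}^{n+1}_+$, $Q\in\mathcal{F}$, on which $\mathfrak{M}^{\mathscr{D}}(\overrightarrow{f\sigma})$ is comparable to the frozen product of averages $\prod_{i=1}^{m}\langle f_i\sigma_i\rangle_Q$. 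This linearisation reduces the $L^p(\mu)$ norm to controlling
\begin{equation*}
\int_{\mathbb{R}^{n+1}_+}\mathfrak{M}^{\mathscr{D}}(\overrightarrow{f\sigma})^{p}\,d\mu
\;\lesssim\;\sum_{Q\in\mathcal{F}}\mu(E(Q))\prod_{i=1}^{m}\langle f_i\sigma_i\rangle_Q^{p}.
\end{equation*}

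Finally, the multilinear Carleson embedding theorem is applied to the sequence $\{\mu(E(Q))\}_{Q\in\mathcal{F}}$: the hypothesis $(\mu,\overrightarrow\omega)\in B'_{\overrightarrow p}$ is precisely the testing condition that certifies the required multilinear Carleson packing with constant $\lesssim [\mu,\overrightarrow\omega]^{p}_{B'_{\overrightarrow p}}$, and the embedding theorem yields
\begin{equation*}
\Bigl(\sum_{Q\in\mathcal{F}}\mu(E(Q))\prod_{i=1}^{m}\langle f_i\sigma_i\rangle_Q^{p}\Bigr)^{1/p}
\;\lesssim\;[\mu,\overrightarrow\omega]_{B'_{\overrightarrow p}}\prod_{i=1}^{m}\|f_i\|_{L^{p_i}(\sigma_i)},
\end{equation*}
which is the desired bound. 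The step I expect to be most delicate is verifying that the regions $\{E(Q)\}_{Q\in\mathcal{F}}$ extracted from the Calder\'on-Zygmund decomposition genuinely form a Carleson packing for $\mu$ in the upper half-space, so that the testing form of $B'_{\overrightarrow p}$ (which is phrased in terms of Carleson boxes $\widetilde Q$) really controls the Carleson constant entering the embedding theorem. This is the point where the geometry of $\mathbb{R}^{n+1}_+$ and the correct choice of the box $\widetilde{Q}$ associated with each principal cube make the verification less automatic than in the $\mathbb{R}^n$ case; once this packing is in hand, the Carleson embedding theorem does the rest.
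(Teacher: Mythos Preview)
Your overall architecture---dyadicise, run the Calder\'on--Zygmund/sparse decomposition from Lemma~\ref{Lemma:CZ}, then apply the multilinear Carleson embedding Lemma~\ref{lem_Carleson_thm}---is exactly the route the paper takes. The equivalence $\|\mathfrak{M}\|=\|\mathfrak{M}\|'$ is handled as you say.

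However, there is a genuine gap in your final step, and you have misidentified where the difficulty lies. You assert that ``$(\mu,\overrightarrow\omega)\in B'_{\overrightarrow p}$ is precisely the testing condition that certifies the required multilinear Carleson packing.'' It is not. Look at Definition~\ref{defi_B infty}: $B'_{\overrightarrow p}$ is a single-cube condition involving the \emph{geometric mean} $\exp\big(\frac{1}{|Q|}\int_Q\log\prod\sigma_i^{p/p_i}\big)$, not a Sawyer-type testing condition. Applying it term by term to $a_Q=\mu(\widetilde Q)\prod_i(\sigma_i(Q)/|Q|)^p$ only gives you
\[
\sum_{Q\in\mathfrak S,\,Q\subseteq R} a_Q \le [\mu,\overrightarrow\omega]_{B'_{\overrightarrow p}}^p \sum_{Q\in\mathfrak S,\,Q\subseteq R}|Q|\exp\Big(\frac{1}{|Q|}\int_Q\log\prod_{i=1}^m\sigma_i^{p/p_i}\Big),
\]
and summing the right-hand side to $\lesssim\int_R\prod_i\sigma_i^{p/p_i}$ is the actual content of the argument. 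The paper does this via the geometric maximal operator $G$: use sparseness in $\mathbb{R}^n$ ($|Q|\le 2|E_Q|$ with the $E_Q$ disjoint), dominate each term pointwise on $E_Q$ by $G\big(\prod_i\sigma_i^{p/p_i}\chi_R\big)$, sum by disjointness, and apply the $L^1$ bound $\int G(h)\le e\int h$ from~\cite{T. Hytonen}.

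Relatedly, the packing hypothesis~\eqref{lem_Carleson_ass} in Lemma~\ref{lem_Carleson_thm} is measured against $\int_R\prod_i\sigma_i^{p/p_i}\,dx$ on~$\mathbb{R}^n$, not against $\mu$ on~$\mathbb{R}^{n+1}_+$; the measure $\mu$ enters only through the coefficients $a_Q$. So the ``geometry of $\mathbb{R}^{n+1}_+$'' is not the issue---once you replace $\mu(\widehat{E_Q})$ by $\mu(\widetilde Q)$ (harmless, since $\widehat{E_Q}\subset\widetilde Q$), the verification lives entirely in~$\mathbb{R}^n$, and the missing ingredient is the geometric-maximal-function step above.
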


\begin{theorem}\label{theorem_bi A Fujii}Suppose $1<p_1,\ldots,p_m<\infty$,
$1/p=1/{p_1}+\cdots+1/{p_m}.$
If $(\mu,\overrightarrow{\omega})\in A'_{\overrightarrow{p}}$
and $\overrightarrow{\omega}\in W^{\infty}_{\overrightarrow{p}},$ then the following statements
are valid:\begin{enumerate}[\rm(1)]
\item \label{thm C 1}There exists a positive constant $C$ such that
$$
\|\mathfrak{M}(\overrightarrow{f})\|_{L^p(\mathbb{R}^{n+1}_+, \mu)}\leq
C\prod\limits^m_{i=1}\|f_i\|_{L^{p_i}(\mathbb{R}^{n},\omega_i)},
~\forall f_i\in L^{p_i}(\mathbb{R}^{n},\omega_i);
$$

\item \label{thm C 2}There exists a positive constant $C$ such that
$$
\|\mathfrak{M}(\overrightarrow{f\sigma})\|_{L^p(\mathbb{R}^{n+1}_+, \mu)}\leq
C\prod\limits^m_{i=1}\|f_i\|_{L^{p_i}(\mathbb{R}^{n},\sigma_i)},
~\forall f_i\in L^{p_i}(\mathbb{R}^{n},\sigma_i).
$$
\end{enumerate}
Moreover, we denote the smallest constants $C$ in (\ref{thm C 1})
and (\ref{thm C 2}) by $\|\mathfrak{M}\|$ and $\|\mathfrak{M}\|',$ respectively.
Then it follows that $\|\mathfrak{M}\|=\|\mathfrak{M}\|'
\lesssim [\mu,\overrightarrow{\omega}]_{A'_{\overrightarrow{p}}}
[\overrightarrow{\omega}]_{W_{\overrightarrow{p}}^\infty}^{\frac{1}{p}}.$
\end{theorem}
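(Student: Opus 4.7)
The plan is to begin by noting that (1) and (2) are equivalent via the standard substitution $f_i=g_i\sigma_i$, where $\sigma_i=\omega_i^{1-p_i'}$. A direct computation gives $\|g_i\sigma_i\|_{L^{p_i}(\omega_i)}=\|g_i\|_{L^{p_i}(\sigma_i)}$, so substitution converts (1) into (2) and back, immediately yielding $\|\mathfrak{M}\|=\|\mathfrak{M}\|'$. It therefore suffices to establish the quantitative bound
\begin{equation*}
\|\mathfrak{M}(\overrightarrow{f\sigma})\|_{L^p(\mathbb{R}^{n+1}_+,\mu)} \lesssim [\mu,\overrightarrow{\omega}]_{A'_{\overrightarrow{p}}}[\overrightarrow{\omega}]_{W_{\overrightarrow{p}}^\infty}^{1/p}\prod_{i=1}^m\|f_i\|_{L^{p_i}(\sigma_i)}.
\end{equation*}

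To do so, I would first invoke the Calder\'on--Zygmund type decomposition established earlier in the paper to reduce to estimating the dyadic analogue $\mathfrak{M}^{\mathcal{D}}$ on a fixed dyadic grid $\mathcal{D}$, at the cost of only a universal constant coming from finitely many grid shifts. For the dyadic operator I would run a principal-cube stopping-time argument: for each $k\in\mathbb{Z}$ let $\{Q_j^k\}_j$ be the maximal dyadic cubes with $\prod_i\langle f_i\sigma_i\rangle_{Q_j^k}>2^k$, and set $E_j^k=\widetilde{Q_j^k}\setminus\bigcup_{j'}\widetilde{Q_{j'}^{k+1}}$, where $\widetilde{Q}=Q\times(0,l(Q)]$ is the tent in $\mathbb{R}^{n+1}_+$. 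The sets $E_j^k$ are pairwise disjoint and a layer-cake decomposition gives
\begin{equation*}
\|\mathfrak{M}^{\mathcal{D}}(\overrightarrow{f\sigma})\|_{L^p(\mu)}^p \lesssim \sum_{k,j}2^{kp}\mu(E_j^k) \lesssim \sum_{k,j}\mu(\widetilde{Q_j^k})\prod_{i=1}^{m}\langle f_i\sigma_i\rangle_{Q_j^k}^{\,p}.
\end{equation*}

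Writing $\langle f_i\sigma_i\rangle_Q=(\sigma_i(Q)/|Q|)\langle f_i\rangle_{\sigma_i,Q}$, the task reduces to applying the multilinear Carleson embedding theorem of Chen--Dami\'an (Lemma 3 in \cite{Chen-Damian}) to the sequence $a_Q=\mu(\widetilde{Q})\prod_i(\sigma_i(Q)/|Q|)^p$, tested against $\sigma_1,\dots,\sigma_m$. The main obstacle, and the only place where the two hypotheses interact quantitatively, is the verification of the Carleson packing
\begin{equation*}
\sum_{Q\subseteq Q_0}a_Q \lesssim [\mu,\overrightarrow{\omega}]_{A'_{\overrightarrow{p}}}^{\,p}[\overrightarrow{\omega}]_{W^{\infty}_{\overrightarrow{p}}}\prod_{i=1}^{m}\sigma_i(Q_0)^{p/p_i}
\end{equation*}
for every dyadic $Q_0$. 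The $A'_{\overrightarrow{p}}$ condition already bounds $a_Q$ pointwise by $[\mu,\overrightarrow{\omega}]_{A'_{\overrightarrow{p}}}^{\,p}\prod_i\sigma_i(Q)^{p/p_i}$, which reduces the problem to a self-improvement for the sum $\sum_{Q\subseteq Q_0}\prod_i\sigma_i(Q)^{p/p_i}$; this is precisely what the Fujii--Wilson type hypothesis $\overrightarrow{\omega}\in W^{\infty}_{\overrightarrow{p}}$ supplies, in direct analogy with the linear $A_\infty$ self-improvement that drives the Hyt\"onen--P\'erez estimate. Once this packing is verified, the Carleson embedding delivers the claimed bound with exponents $1$ on $[\mu,\overrightarrow{\omega}]_{A'_{\overrightarrow{p}}}$ and $1/p$ on $[\overrightarrow{\omega}]_{W^{\infty}_{\overrightarrow{p}}}$, completing the proof.
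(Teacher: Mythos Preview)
Your approach is essentially identical to the paper's: reduce to a dyadic grid, use the sparse Calder\'on--Zygmund decomposition to set up the coefficients $a_Q=\mu(\widetilde{Q})\prod_i(\sigma_i(Q)/|Q|)^p$, verify the Carleson packing via $A'_{\overrightarrow{p}}$ followed by sparseness and $W^\infty_{\overrightarrow{p}}$, and then invoke the Chen--Dami\'an embedding. One small correction: the embedding lemma requires the packing with right-hand side $\int_{Q_0}\prod_i\sigma_i^{p/p_i}$, not the larger quantity $\prod_i\sigma_i(Q_0)^{p/p_i}$; the sparseness step ($|Q|\le 2|E_Q|$, disjointness of the $E_Q$, and domination by $\prod_i M(\sigma_i\chi_{Q_0})^{p/p_i}$) followed by $W^\infty_{\overrightarrow{p}}$ in fact delivers this stronger integral bound, exactly as in the paper, so state and prove the packing in that form.
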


\begin{remark}\label{reduce}In the above theorems, we can obviously assume that $f_i\geq0$ and
$f_i\in L^1(\mathbb{R}^n),$ $i=1,\cdot\cdot\cdot,m.$ Indeed, there are integral functions
$f^{(j)}_i,$ such that $f^{(j)}_i\uparrow f_i,$ $i=1,\cdot\cdot\cdot,m.$ It is clear that $\{(x,t)\in \mathbb{R}_+^{n+1} : \mathfrak{M}\overrightarrow{f}(x,t)>\lambda\}=\bigcup_j\{(x,t)\in \mathbb{R}_+^{n+1}:
\mathfrak{M}\overrightarrow{f^{(j)}}(x,t)>\lambda\},$ where $\overrightarrow{f^{(j)}}=(f^{(j)}_1,\dots,f^{(j)}_m).$
\end{remark}

The article is organized as follows.
In Section \ref{sec2}, we state some preliminaries and definitions
and give a Calder\'{o}n-Zygmund type decomposition. In Section \ref{proofs}, we provide the proofs of the above theorems.

Throughout the paper, we use the notation $A\lesssim B$ to indicate that there is a constant $c$,
independent of the weight constant, such that $A\leq cB.$ We write $A\thickapprox B$ when $A\lesssim B$
and $B\lesssim A.$

\section{Preliminaries and definitions} \label{sec2}

Let $Q$ be a cube in $\mathbb{R}^n.$ We denote by $\tilde{Q}$ the cube built as follows
$$
\widetilde{Q}=\left\{ (x,t)\in\mathbb{R}_{+}^{n+1}\,:\,x\in Q,\text{ and }0\leq t<l(Q)\right\} ,
$$
in other words, $\widetilde{Q}$ is the cube in $\mathbb{R}_{+}^{n+1}$
having $Q$ as a face.

Recall that the standard dyadic grid in $\mathbb{R}^n$ consists of the cubes
$$2^{-k}([0,1)^n+j),k\in Z,j\in \mathbb{Z}^n.$$
Denote the standard grid by $\mathcal{D}.$

By a general dyadic grid $\mathfrak{D}$ we mean a collection of cubes
with the following properties: \begin{enumerate}[\rm(i)]
\item for any $Q\in \mathfrak{D}$ its side length $l_Q$ is of the form $2^k, k\in Z;$
\item $Q\cap R\in \{Q,R,\emptyset\}$ for any $Q,R\in\mathfrak{D};$
\item the cubes of a fixed side length $2^k$ form a partition of $\mathbb{R}^n.$ \end{enumerate}

We say the $\mathfrak{S}\triangleq\bigcup\limits_{k,j}\{Q_{j}^{k}\}$ is a sparse family of cubes if:
\begin{enumerate}[\rm(i)]
\item the cubes $Q_j^k$ are disjoint in $j,$ with $k$ fixed;
\item denote $\Omega_k=\cup_jQ_j^k,$ then $\Omega_{k+1}\subseteq\Omega_k;$
\item $|\Omega_{k+1}\cap Q_j^k|\leq\frac{1}{2}|Q_j^k|.$
\end{enumerate}

With each sparse family $\{Q_j^k\}$ we associate the sets $E_{Q_j^k}\triangleq Q_j^k\backslash\Omega_{k+1}.$
For simplicity, we also denote $E_{Q_j^k}$ by $E_j^k$.
Observe that the sets $\{E_j^k\}$ are pairwise disjoint and $|E_j^k|\leq|Q_j^k|\leq2|E_j^k|.$

The following Lemma \ref{Lemma:CZ} is a Calder\'{o}n-Zygmund type decomposition which help us to prove our theorems in a unified approach.

\begin{lemma}\label{Lemma:CZ} Let $a=2^{m(n+1)}.$ Let $\mathfrak{D}$ be a general dyadic grid. Let $f_1,f_2,\cdot\cdot\cdot,f_m$ be non-negative integrable functions.
For each $k\in\mathbb{Z},$ we can choose a family $\{Q_{j}^{k}\}_{j\in J_k}\subseteq\mathfrak{D}$ such that

\begin{enumerate}[(1)]
\item \label{lemde1}$a^k<\prod\limits_{i=1}^{m}\frac{1}{|Q_{j}^{k}|}\int_{Q_{j}^{k}}f_i(y)dy\leq 2^{mn}a^k.$
\item \label{lemde2}The interiors of $\widetilde{Q_{j}^{k}}$ with $j\in J_k$ are pairwise disjoint.
\item \label{lemde3}$\widehat{\Omega_k}\triangleq\left\{(x,t)\in \mathbb{R}_+^{n+1} : \mathfrak{M}^{\mathfrak{D}}\overrightarrow{f}(x,t)>a^k\right\}=\bigcup_{j\in J_{k}}\widetilde{Q_{j}^{k}}.$
\end{enumerate}
Furthermore, the family of cubes $\mathfrak{S}\triangleq\bigcup\limits_{k,j}\{Q_{j}^{k}\}$ is sparse.\end{lemma}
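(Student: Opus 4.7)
The plan is to run a Calder\'on--Zygmund stopping-time selection inside the dyadic grid $\mathfrak D$. For each $k\in\mathbb Z$, I would take $\{Q_j^k\}_{j\in J_k}$ to be the collection of dyadic cubes $Q\in\mathfrak D$ that are \emph{maximal} subject to
$$\prod_{i=1}^{m}\frac{1}{|Q|}\int_Q f_i(y)\,dy > a^k.$$
Such maximal cubes exist since, by Remark \ref{reduce}, I may assume each $f_i\in L^1(\mathbb R^n)$, which forces the product of averages to tend to $0$ as $l(Q)\to\infty$.

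Properties \eqref{lemde1} and \eqref{lemde2} then follow from maximality in a routine way. The lower bound in \eqref{lemde1} is the selection condition itself; for the upper bound I would pass to the dyadic parent $\widehat{Q_j^k}$, where by maximality $\prod_i\tfrac{1}{|\widehat{Q_j^k}|}\int_{\widehat{Q_j^k}}f_i\le a^k$, and then use $|\widehat{Q_j^k}|=2^n|Q_j^k|$ to pick up the factor $2^{mn}$. Property \eqref{lemde2} is immediate because distinct maximal cubes are disjoint in $\mathbb R^n$, so the $\widetilde{Q_j^k}$'s sit in disjoint vertical columns in $\mathbb R_+^{n+1}$.

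For \eqref{lemde3}, the inclusion $\bigcup_j\widetilde{Q_j^k}\subseteq\widehat{\Omega_k}$ is clear: for $(x,t)\in\widetilde{Q_j^k}$, the cube $Q_j^k$ itself witnesses $\mathfrak M^{\mathfrak D}\overrightarrow f(x,t)>a^k$. Conversely, given $(x,t)\in\widehat{\Omega_k}$, I would pick a dyadic cube $Q\ni x$ with $l(Q)\ge t$ whose product of averages exceeds $a^k$, enlarge $Q$ to the maximal such cube $Q_j^k$ containing it (integrability guarantees the chain stabilizes), and observe that $x\in Q_j^k$ and $t\le l(Q)\le l(Q_j^k)$.

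The main obstacle is the sparseness condition, which is exactly what dictates the choice $a=2^{m(n+1)}$. The nesting $\Omega_{k+1}\subseteq\Omega_k$ is automatic since any cube whose product of averages exceeds $a^{k+1}$ also exceeds $a^k$ and must therefore lie in some $Q_j^k$. For the density bound $|\Omega_{k+1}\cap Q_j^k|\le\tfrac12|Q_j^k|$, for each sub-cube $Q_{j'}^{k+1}\subseteq Q_j^k$ the selection condition gives
$$|Q_{j'}^{k+1}|<a^{-(k+1)/m}\prod_{i=1}^{m}\Bigl(\int_{Q_{j'}^{k+1}}f_i\Bigr)^{1/m}.$$
Summing over $j'$ and applying the generalized H\"older inequality for sums with exponents $m,\dots,m$, together with disjointness of the $Q_{j'}^{k+1}$ inside $Q_j^k$ and the upper bound from \eqref{lemde1} on $Q_j^k$, I expect
$$|\Omega_{k+1}\cap Q_j^k|\le a^{-(k+1)/m}\prod_{i=1}^{m}\Bigl(\int_{Q_j^k}f_i\Bigr)^{1/m}\le 2^{n}a^{-1/m}|Q_j^k|,$$
and the precise value $a=2^{m(n+1)}$ reduces the right-hand side to $\tfrac12|Q_j^k|$. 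This is the one place where the constant must be chosen carefully; everything else is essentially the standard stopping-time scheme adapted to the multilinear upper-half-space setting.
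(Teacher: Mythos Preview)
Your proposal is correct and follows essentially the same approach as the paper: select the maximal dyadic cubes whose product of averages exceeds $a^k$, verify \eqref{lemde1}--\eqref{lemde3} from maximality, and establish sparseness by combining the selection bound $|Q_{j'}^{k+1}|<a^{-(k+1)/m}\prod_i\bigl(\int_{Q_{j'}^{k+1}}f_i\bigr)^{1/m}$ with H\"older's inequality for sums and the upper bound in \eqref{lemde1}. The paper's only cosmetic difference is that it justifies existence of maximal cubes via finiteness of $|\{\mathcal M^{\mathfrak D}\overrightarrow f>a^k\}|$ rather than your equivalent observation that the product of averages tends to zero as $l(Q)\to\infty$.
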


With each sparse family $\{Q_j^k\}$ we associate the sets
$\widehat{E_{Q_j^k}}\triangleq \widetilde{Q_j^k}\backslash\widehat{{\Omega}_{k+1}}.$
For simplicity, we also denote
$\widehat{E_{Q_j^k}}$ by
$\widehat{E_j^k}.$

In this paper, we will use the well-known Lemma \ref{Hytonen} from \cite{T. Hytonen}.

\begin{lemma}\label{Hytonen}There are $2^n$ dyadic grids $\mathfrak{D}_\beta$ such that for
all cube $Q\subseteq \mathbb{R}^n$ there exists a cube $Q_\beta\in\mathfrak{D}_\beta$ such that
$Q\subseteq Q_\beta$ and $l_{Q_\beta}\leq 6l_Q .$
\end{lemma}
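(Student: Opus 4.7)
The plan is to use the classical one-third trick, which supplies $2^n$ shifted dyadic systems whose union is rich enough to catch every cube at a comparable scale. For each $\beta=(\beta_1,\dots,\beta_n)\in\{0,1/3\}^n$, I would define the shifted grid
$$\mathfrak{D}^{\beta}=\bigl\{\,2^{k}\bigl([0,1)^n+j\bigr)+(-1)^{k}\,2^{k}\beta\,:\,k\in\mathbb{Z},\ j\in\mathbb{Z}^n\,\bigr\}$$
and verify the three axioms of a general dyadic grid listed in Section \ref{sec2}. The side-length condition (i) and the partition condition (iii) are immediate, since at any fixed scale $2^k$ the family $\mathfrak{D}^{\beta}$ is a pure translate of the standard grid $\mathcal{D}$. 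The nesting property (ii) across scales is the only nontrivial check, and the alternating sign $(-1)^{k}$ is inserted exactly so that the shift of a cube aligns with the shift of its dyadic parent one scale up, preserving the binary tree structure.

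Next, given a cube $Q\subseteq\mathbb{R}^n$ with side length $s$, I would select $k\in\mathbb{Z}$ to be the smallest integer with $2^{k}\geq 3s$; then automatically $3s\leq 2^{k}<6s$, which already fixes the target side length within the prescribed range. The choice of $\beta$ is then made coordinate by coordinate via the one-third trick: the projection of $Q$ on the $i$-th axis is an interval of length $s$, and the two one-dimensional grids at scale $2^{k}$ associated with $\beta_i\in\{0,1/3\}$ have their endpoint lattices $2^{k}\mathbb{Z}$ and $2^{k}\mathbb{Z}+(-1)^{k}2^{k}/3$, which interlace with gap $2^{k}/3\geq s$. Consequently at least one of the two choices of $\beta_i$ yields a one-dimensional dyadic interval of length $2^{k}$ entirely containing this projection. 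Combining the coordinate-wise selections produces a single $\beta\in\{0,1/3\}^n$ and a cube $Q_{\beta}\in\mathfrak{D}^{\beta}$ of side length $2^{k}<6\,l(Q)$ with $Q\subseteq Q_{\beta}$, as required.

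The main obstacle, and really the only one, is bookkeeping: one must simultaneously juggle the alternating sign $(-1)^{k}$ that enforces nestedness across scales and the one-third shift that powers the interlacing argument at a single scale. If the grid axioms are verified carelessly these two ingredients can appear to conflict, but once the definition is written down in the form above each step reduces to a one-dimensional elementary check, and the construction delivers the desired $2^n$ grids together with the covering bound $l_{Q_{\beta}}\leq 6\,l_Q$.
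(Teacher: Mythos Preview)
Your proposal is correct and follows the standard one-third trick construction; the verification of the grid axioms via the alternating sign $(-1)^k$ and the coordinate-wise interlacing argument with gap $2^k/3\ge s$ are both sound. Note, however, that the paper does not supply its own proof of this lemma: it is quoted as a well-known fact from Hyt\"onen--P\'erez \cite{T. Hytonen}, so there is no in-paper argument to compare against. What you have written is essentially the construction behind the cited reference, so in that sense your approach coincides with the intended one.
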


\begin{remark}\label{decop}
It follows from Lemma \ref{Hytonen} that there exists $2^n$ families of dyadic grids $\mathfrak{D}_\beta$ such that
$$\mathfrak{M}(\overrightarrow{f})(x,t)\leq 6^{mn}\sum_{\beta=1}^{2^n}\mathfrak{M}^{\mathfrak{D}_\beta}(\overrightarrow{f})(x,t),$$
where
$$\mathfrak{M}^{\mathfrak{D}_\beta}(\overrightarrow{f})(x,t)=\sup_{x\in Q\in\mathfrak{D}_\beta,l(Q)\geq t}\prod_{i=1}^m\frac{1}{|Q|}\int_{Q}f_i(y)dy.$$
Let $q>0.$ It follows that
$$\|\mathfrak{M}(\overrightarrow{f})\|_{L^{p,\infty}(\mathbb{R}_+^{n+1},\mu)}\lesssim
\sum_{\beta=1}^{2^n}\|\mathfrak{M}^{\mathfrak{D}_\beta}(\overrightarrow{f})\|_{L^{p,\infty}(\mathbb{R}_+^{n+1},\mu)}$$
and
$$\|\mathfrak{M}(\overrightarrow{f})\|_{L^{p}(\mathbb{R}_+^{n+1},\mu)}\lesssim
\sum_{\beta=1}^{2^n}\|\mathfrak{M}^{\mathfrak{D}_\beta}(\overrightarrow{f})\|_{L^{p}(\mathbb{R}_+^{n+1},\mu)}.$$
Let $\mathfrak{D}$ be a general dyadic grid. It follows from Lemma \ref{Lemma:CZ} that there exists a sparse subset $\mathfrak{S}\subset \mathfrak{D}$ such that
\begin{eqnarray*}\mathfrak{M}^{\mathfrak{D}}(\overrightarrow{f})^q
&\leq& \sum_{k\in\mathbb{Z}}\sum_{j\in J_k}a^{(k+1)q}\chi_{\widehat{E_j^k}}\leq a^q\sum_{Q\in \mathfrak{S}}\big(\prod\limits_{i=1}^{m}\frac{1}{|Q|}\int_{Q}f_i(y)dy\big)^q\chi_{\widehat{E_Q}}\\
&\leq& a^q\sum_{Q\in \mathfrak{S}}\big(\prod\limits_{i=1}^{m}\frac{1}{|Q|}\int_{Q}f_i(y)dy\big)^q\chi_{\widetilde{Q}}.
\end{eqnarray*}\end{remark}

In addition, there is a formulation of the Carleson embedding theorem in the multilinear
setting which was proved in \cite{Chen-Damian}.

\begin{lemma} \cite{Chen-Damian}\label{lem_Carleson_thm} Let $\mathfrak{D}$ be a general dyadic grid.
If the nonnegative numbers $a_Q$ and non-negative
function $\prod\limits_{i=1}\limits^{m}\sigma_i^{\frac{p}{p_i}}$
satisfy
\begin{equation}\label{lem_Carleson_ass}
\sum\limits_{Q\subseteq R}a_Q\leq A \int_R \prod\limits^m_{i=1}\sigma_i^{\frac{p}{p_i}}(x)d x,
~\forall R\in \mathfrak{D},\end{equation}
then
\begin{eqnarray*}\label{lem_Carleson}
\sum\limits_{Q\in \mathfrak{D}}a_Q\big(\prod\limits_{i=1}
   \limits^{m}\frac{1}{\sigma_i(Q)}\int_{Q}f_i(x)\sigma_i(x)dx\big)^p
\leq A(\prod\limits_{i=1}
       \limits^{m}p'_i)^p\prod\limits_{i=1}
       \limits^{m}\big(\int_{\mathbb{R}^n}f_i^{p_i}(x)\sigma_i(x) dx\big)^{\frac{p}{p_i}}.
\end{eqnarray*}
\end{lemma}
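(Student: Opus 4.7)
The plan is to adapt the one-variable Carleson embedding argument via principal (stopping) cubes, applied separately to each $f_i$. For each $i\in\{1,\ldots,m\}$, I construct a stopping family $\mathcal F_i\subset\mathfrak D$ by declaring a maximal dyadic $Q\subsetneq P$ to be a principal child of $P$ when $\langle f_i\rangle_{\sigma_i,Q}>2\langle f_i\rangle_{\sigma_i,P}$, where $\langle g\rangle_{\sigma,Q}:=\sigma(Q)^{-1}\int_Q g\,\sigma$. A standard maximal-function argument delivers $\sigma_i$-sparseness of $\mathcal F_i$, and for $\pi_i(Q)$ the smallest element of $\mathcal F_i$ containing $Q$ one has $\langle f_i\rangle_{\sigma_i,Q}\leq 2\langle f_i\rangle_{\sigma_i,\pi_i(Q)}$.

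Applied factor by factor this gives
\[\sum_{Q\in\mathfrak D}a_Q\prod_i\langle f_i\rangle_{\sigma_i,Q}^p\leq 2^{mp}\sum_Q a_Q\prod_i\langle f_i\rangle_{\sigma_i,\pi_i(Q)}^p,\]
which I would regroup by the tuple $(\pi_1(Q),\ldots,\pi_m(Q))\in\prod_i\mathcal F_i$. Since dyadic cubes all containing a common $Q$ are linearly ordered by inclusion, every non-empty fiber lies inside $P_{\min}:=\bigcap_i P_i$, so the Carleson hypothesis \eqref{lem_Carleson_ass} bounds $\sum_{Q:\pi_i(Q)=P_i\ \forall i}a_Q$ by $A\int_{P_{\min}}\prod_i\sigma_i^{p/p_i}\,dx$. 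Splitting by which index achieves $P_{\min}$ (paying a factor $m$) and relabelling it as $m$, the sums over $P_i\supseteq P_m$ in $\mathcal F_i$ for $i\neq m$ run along ascending chains; on such a chain the averages $\langle f_i\rangle_{\sigma_i,P_i}$ are halved at each step by the stopping property, so a geometric series bounds each chain sum by $C_p\langle f_i\rangle_{\sigma_i,\pi_i[P_m]}^p$, where $\pi_i[P_m]$ is the smallest $\mathcal F_i$-cube containing $P_m$.

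What remains to estimate is
\[A\sum_{P_m\in\mathcal F_m}\tilde\sigma(P_m)\prod_i\langle f_i\rangle_{\sigma_i,\pi_i[P_m]}^p,\qquad\tilde\sigma:=\prod_i\sigma_i^{p/p_i}.\]
Writing $\tilde\sigma(P_m)=\int_{P_m}\tilde\sigma\,dx$ and exchanging sum and integral reduces matters to bounding $\sum_{P_m\ni x}\prod_i\langle f_i\rangle_{\sigma_i,\pi_i[P_m]}^p$ pointwise in $x$. For fixed $x$ the $\mathcal F_m$-cubes through $x$ again form an ascending chain with geometrically decaying $m$th factor, while the other factors are dominated by $M_{\sigma_i}^{\mathfrak D}f_i(x)$; this delivers the pointwise bound $\prod_i(M_{\sigma_i}^{\mathfrak D}f_i(x))^p$. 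A multilinear H\"older with exponents $q_i=p_i/p$ (so that $\sum 1/q_i=1$) together with the sharp weighted dyadic maximal inequality $\|M_{\sigma_i}^{\mathfrak D}g\|_{L^{p_i}(\sigma_i)}\leq p_i'\|g\|_{L^{p_i}(\sigma_i)}$ then produces exactly the factor $(\prod_i p_i')^p$ claimed in the statement.

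The main obstacle I anticipate is the bookkeeping around the $m$-fold stopping structure: one must set up the generations of the $\mathcal F_i$ uniformly enough that the geometric-decay argument runs simultaneously on every coordinate, and the asymmetric reduction ``let $m$ be the minimal index'' has to combine cleanly with the tuple partition so that the sharp constant $(\prod_i p_i')^p$ survives modulo an unavoidable dimension-$m$ combinatorial factor.
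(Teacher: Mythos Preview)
The paper does not actually prove this lemma: it is quoted verbatim from \cite{Chen-Damian} (see the sentence immediately preceding the lemma and the citation in its header), so there is no in-paper argument to compare your proposal against.

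That said, your multi-tree stopping scheme is essentially sound. The steps---building principal families $\mathcal F_i$ for each weight, freezing the averages at the stopping parents, regrouping by the tuple $(\pi_1(Q),\ldots,\pi_m(Q))$, using nestedness of dyadic cubes to locate $P_{\min}$, invoking the Carleson hypothesis on $P_{\min}$, and then collapsing the chains in the non-minimal coordinates by geometric decay---all go through. The final pointwise domination by $\prod_i M_{\sigma_i}^{\mathfrak D}f_i(x)^p$ followed by H\"older with exponents $p_i/p$ and the sharp bound $\|M_{\sigma_i}^{\mathfrak D}\|_{L^{p_i}(\sigma_i)\to L^{p_i}(\sigma_i)}\le p_i'$ is exactly the right endgame.

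The only genuine gap is the constant. Your argument accumulates a factor $2^{mp}$ from the stopping comparison, a factor $m$ from the minimal-index split, and further geometric-series constants $(1-2^{-p})^{-1}$ at two stages; you end up with $C(m,p)\,A\,(\prod_i p_i')^p$ rather than the exact $A(\prod_i p_i')^p$ asserted in the lemma. You flag this yourself at the end, but it means the lemma \emph{as stated} is not established by your outline. For the applications in the present paper (Theorems~\ref{theorem_Sp}, \ref{theorem_Bp}, \ref{theorem_bi A Fujii}) this is harmless, since those results are phrased with $\lesssim$ and do not rely on the sharp embedding constant; but if you want the displayed inequality on the nose you would need the original Chen--Dami\'an argument rather than this stopping-time variant.
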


Now, we introduce the definitions which will be used in this paper.
First, let us pay attention to  multiple $A_p$ weights.
In \cite{Lerner A.K. Ombrosi S.}, Lerner, Ombrosi, P\'{e}rez, Torres and Trujillo-Gonz\'{a}lez introduced the theory of multiple $A_{\overrightarrow{P}}$ weights.

\begin{definition}\cite{Lerner A.K. Ombrosi S.}
Let $\overrightarrow{P}=(p_1,\cdots,p_m)$ with $1< p_1,\cdots,p_m<\infty$ and $1/{p_1}+\cdots+1/{p_m}=1/p$. Given $\overrightarrow{\omega}=(\omega_1,\cdots, \omega_m)$, set
$$v_{\overrightarrow{\omega}}=\prod_{i=1}^m \omega_i^{p/{p_i}}.$$
We say that $\overrightarrow{\omega}$ satisfies the multilinear $A_{\overrightarrow{P}}$ condition if
$$[\overrightarrow{\omega}]_{A_{\overrightarrow{P}}}:=
\sup_{Q} \left(\frac{1}{|Q|}\int_Q v_{\overrightarrow{w}}(x)dx\right)^{1/p}\prod_{i=1}^m\left( \frac{1}{|Q|}\int_Q \omega_i^{1-p_i'}(x)dx\right)^{1/{p_i'}}<\infty,$$
where the supremum is taken over all cubes in $\mathbb{R}^n$ and $[\overrightarrow{\omega}]_{A_{\overrightarrow{P}}}$ is called the $A_{\overrightarrow{P}}$ constant of $\overrightarrow{\omega}.$
\end{definition}

We define a new multiple $A_p$ weights which involve a Borel measure
on $\mathbb{R}_{+}^{n+1}.$

\begin{definition}
Let $\overrightarrow{P}=(p_1,\cdots,p_m)$ with $1< p_1,\cdots,p_m<\infty$ and $1/{p_1}+\cdots+1/{p_m}=1/p$.
Let $\mu$ be a Borel measure
on $\mathbb{R}_{+}^{n+1}.$ We denote $\overrightarrow{w}=(w_1,\cdots, w_m),$ where $w_i,$ $i=1,2,\cdots,m$ be weights in $\mathbb{R}^{n}.$
We say that $(\mu,\overrightarrow{w})$ satisfies the multilinear $A'_{\overrightarrow{P}}$ condition if
$$[\mu,\overrightarrow{w}]_{A'_{\overrightarrow{P}}}:=\sup_{Q}(\frac{\mu(\widetilde{Q})}{|Q|})^{1/p}\prod_{i=1}^m\left( \frac{1}{|Q|}\int_Q w_i^{1-p_i'}dx\right)^{1/{p_i'}}<\infty,
$$
where the supremum is taken over all cubes in $\mathbb{R}^n$ and $[\mu,\overrightarrow{w}]_{A'_{\overrightarrow{P}}}$ is called the $A'_{\overrightarrow{P}}$ constant of $(\mu,\overrightarrow{w}).$
\end{definition}

Second, we investigate the relation between $\mu(\widetilde{Q})$ and $\int_Qv(x)dx,$
where $\widetilde{Q}$ is the cube in $\mathbb{R}_{+}^{n+1}$
having $Q$ as a face, $\mu$ ie a Borel measure
on $\mathbb{R}_{+}^{n+1}$ and $v$ is a weight in $\mathbb{R}^{n}.$ Ruiz \cite{Ruiz} investigated the relation and gave the Definition \ref{def_C_infty}.

\begin{definition}\label{def_C_infty}\cite{Ruiz}
Let $\mu$ be a Borel measure
on $\mathbb{R}_{+}^{n+1}.$ Let $v$ be a weight in $\mathbb{R}^{n}.$
We say that $(\mu,v)$ satisfies the $C_{\infty}$ condition if
$$[\mu,v]_{C_{\infty}}:=\sup_{Q}\mu(\widetilde{Q})(\int_Qv(x)dx)^{-1}<\infty,
$$
where the supremum is taken over all cubes in $\mathbb{R}^n$ and $[\mu,v]_{C_{\infty}}$ is called the $C_{\infty}$ constant of $(\mu,v).$
\end{definition}

We give the following Definition \ref{def_C_0}, which is the opposite of the Definition \ref{def_C_infty}.

\begin{definition}\label{def_C_0}
Let $\mu$ be a Borel measure
on $\mathbb{R}_{+}^{n+1}.$ Let $v$ be a weight in $\mathbb{R}^{n}.$
We say that $(\mu,v)$ satisfies the $C_0$ condition if
$$[\mu,v]_{C_0}:=\sup_{Q}\mu(\widetilde{Q})^{-1}(\int_Qv(x)dx)<\infty,
$$
where the supremum is taken over all cubes in $\mathbb{R}^n$ and $[\mu,v]_{C_0}$ is called the $C_0$ constant of $(\mu,v).$
\end{definition}

Third, we define multiple $S_p$ and $B_p$ weights, which involve a Borel measure
on $\mathbb{R}_{+}^{n+1}.$

\begin{definition}
Let $\overrightarrow{P}=(p_1,\cdots,p_m)$ with $1< p_1,\cdots,p_m<\infty$ and $1/{p_1}+\cdots+1/{p_m}=1/p$.
Let $\mu$ be a Borel measure
on $\mathbb{R}_{+}^{n+1}.$ We denote $\overrightarrow{w}=(w_1,\cdots, w_m),$ where $\omega_i,$ $i=1,2,\cdots,m$ be weights in $\mathbb{R}^{n}.$
We say that $(\mu,\overrightarrow{\omega})$ satisfies the multilinear $S'_{\overrightarrow{P}}$ condition if
$$[\mu,\overrightarrow{\omega}]_{S'_{\overrightarrow{p}}}
\triangleq\sup_{Q}\big(\int_{\widetilde{Q}}\mathfrak{M}(\overrightarrow{\sigma\chi_Q})^{p}d\mu\big)^{\frac{1}{p}}
(\prod\limits^m_{i=1}\sigma_i(Q)^{\frac{1}{p_i}})^{-1}<\infty,$$
where $\overrightarrow{\sigma\chi_Q}=(\omega_1^{1-p_1'}\chi_Q,\cdots, \omega_m^{1-p_m'}\chi_Q),$ the supremum is taken over all cubes in $\mathbb{R}^n$ and $[\mu,\overrightarrow{w}]_{S'_{\overrightarrow{P}}}$ is called the $S'_{\overrightarrow{P}}$ constant of $(\mu,\overrightarrow{w}).$
\end{definition}

\begin{definition}\label{defi_B infty}
Let $\overrightarrow{P}=(p_1,\cdots,p_m)$ with $1< p_1,\cdots,p_m<\infty$ and $1/{p_1}+\cdots+1/{p_m}=1/p$. Let $\mu$ be a Borel measure
on $\mathbb{R}_{+}^{n+1}.$
We denote $\overrightarrow{w}=(w_1,\cdots, w_m),$ where $\omega_i,$ $i=1,2,\cdots,m$ be weights in $\mathbb{R}^{n}.$
We say that
$(\mu,\overrightarrow w)$ satisfies the $B'_{\overrightarrow{P}}$ condition if
\begin{equation*}\label{Bp_constant}
    [\mu,\overrightarrow w]_{B'_{\overrightarrow P}}:= \sup_Q \big(\frac{\mu(\widetilde{Q})}{|Q|}\big)^{\frac{1}{p}} \prod^m_{i=1}\frac{w_i(Q)}{|Q|}
  \exp\Big(\frac{1}{|Q|}\int_Q\log\prod^m_{i=1}w_i^{-\frac{1}{p_i}} dx\Big)<\infty.
\end{equation*}
where the supremum is taken over all cubes in $\mathbb{R}^n.$
\end{definition}

Last, let us recall the definitions of $RH_{\overrightarrow{p}}$ and $W_{\overrightarrow P}^\infty$ which were introduced in \cite{Chen-Damian}.

\begin{definition}\cite{Chen-Damian}\label{defi_RH}
Let $\overrightarrow{P}=(p_1,\cdots,p_m)$ with $1< p_1,\cdots,p_m<\infty$ and $1/{p_1}+\cdots+1/{p_m}=1/p$.
We denote $\overrightarrow{w}=(w_1,\cdots, w_m),$ where $\omega_i,$ $i=1,2,\cdots,m$ be weights in $\mathbb{R}^{n}.$
We say that $\overrightarrow{\omega}$ satisfies the reverse H\"{o}lder's condition $RH_{\overrightarrow{p}},$  if
$$\prod\limits^m_{i=1}\big(\int_Q\sigma_idx\big)^{\frac{p}{p_i}}
\leq  C\int_Q\prod\limits^m_{i=1}\sigma_i^{\frac{p}{p_i}}
dx,~\forall ~cube~Q,$$
where $\sigma_i=\omega_i^{1-p_i'},$ $i=1,\cdot\cdot\cdot,m$ and the smallest constant $C$ is denoted by $[\overrightarrow{\omega}]_{RH_{\overrightarrow{p}}}.$
\end{definition}

\begin{definition}\cite{Chen-Damian}\label{defi_W infty}
Let $\overrightarrow{P}=(p_1,\cdots,p_m)$ with $1< p_1,\cdots,p_m<\infty$ and $1/{p_1}+\cdots+1/{p_m}=1/p$.
We denote $\overrightarrow{w}=(w_1,\cdots, w_m),$ where $\omega_i,$ $i=1,2,\cdots,m$ be weights in $\mathbb{R}^{n}.$
We say that
 $\overrightarrow w$ satisfies the $W_{\overrightarrow P}^\infty$ condition if
    \begin{equation*}\label{Fujii_constant}
      [\overrightarrow w]_{W_{\overrightarrow P}^\infty}=\sup_Q \Big(\int_Q\prod^m_{i=1}M(w_i\chi_Q)^{\frac{p}{p_i}}dx\Big)\Big(\int_Q\prod^m_{i=1}w_i^{\frac{p}{p_i}} dx\Big)^{-1}<\infty,
    \end{equation*}
where the supremum is taken over all cubes in $\mathbb{R}^n.$
\end{definition}

\begin{remark} Let $\overrightarrow{P}=(p_1,\cdots,p_m)$ with $1< p_1,\cdots,p_m<\infty$ and $1/{p_1}+\cdots+1/{p_m}=1/p$.
If $\overrightarrow{\omega}$ satisfies the multilinear $A_{\overrightarrow{P}}$ condition,
then $\overrightarrow{\omega}\in RH_{\overrightarrow{p}}$
and $\overrightarrow{\omega}\in W_{\overrightarrow P}^\infty $ \cite[Proposition 2.3]{cao-xue}.
\end{remark}

\section{Proofs}\label{proofs}

\begin{proof}[Proof of Lemma \ref{Lemma:CZ}] Let
$\mathcal{M}^{\mathfrak{D}}(\overrightarrow{f})(x)=\sup_{x\in Q\in\mathfrak{D}}\prod_{i=1}^m\frac{1}{|Q|}\int_{Q}f_i(y)dy.$
It follows that $$|\{\mathcal{M}^{\mathfrak{D}}\overrightarrow{f}(x)>a^k|\leq\sum\limits_{i=1}^{m}|\{M^{\mathfrak{D}}f_i(x)>a^{k/m}|<\infty,$$
where $M^{\mathfrak{D}}(f_i)(x)=\sup_{x\in Q\in\mathfrak{D}}\frac{1}{|Q|}\int_{Q}f_i(y)dy,~i=1,2,\cdot\cdot\cdot,m.$
For each $(x,t)\in \widetilde{\Omega}_{k},$ there is a maximal cube $Q\in\mathfrak{D}$ such that $x\in Q,$ $l(Q)\geq t$ and
$$\prod\limits_{i=1}^{m}\frac{1}{|Q|}\int_{Q}f_i(x)dx>a^k,$$
for otherwise $\{\mathcal{M}^{\mathfrak{D}}\overrightarrow{f}(x)>a^k\}$ would have infinite measure.
It's clear that this collection of cubes satisfies conditions \eqref{lemde1}, \eqref{lemde2} and \eqref{lemde3}.
To end the proof we have to prove that the family $\bigcup\limits_{k,j}\{Q_{j}^{k}\}$ is sparse.

For each $k\in\mathbb{Z},$ we observe that
$$
|Q_{j}^{k}\cap \Omega_{k+1}|=\sum_{j'\in J_{k+1}}|Q_{j}^{k}\cap Q_{j'}^{k+1}|
$$
Now since $Q_{j}^{k},Q_{j'}^{k+1}\in\mathfrak{D},$ we have
that $Q_{j}^{k}\cap Q_{j'}^{k+1}\not=\emptyset$ implies that either
$Q_{j}^{k}\subseteq Q_{j'}^{k+1}$ or $Q_{j'}^{k+1}\subseteq Q_{j}^{k}$.
Now we observe that from the definition of $\Omega_{k}$ it follows that
$Q_{j'}^{k+1}\subseteq \Omega_{k}$. Consequently $Q_{j'}^{k+1}\subseteq Q_{j}^{k}$
by maximality. Taking that into account, we have
\begin{eqnarray*}
|Q_{j}^{k}\cap \Omega_{k+1}| &=&\sum_{j'\in J_{k+1}}|Q_{j}^{k}\cap Q_{j'}^{k+1}|=\sum_{Q_{j'}^{k+1}\subseteq Q_{j}^{k}}|Q_{j'}^{k+1}|.
\end{eqnarray*}
It follows from \eqref{lemde1} and H\"{o}lder's inequality that
\begin{eqnarray*}
|Q_{j}^{k}\cap \Omega_{k+1}|
&\leq&\sum_{Q_{j'}^{k+1}\subseteq Q_{j}^{k}}(\frac{1}{a^{k+1}})^{\frac{1}{m}}(\prod\limits_{i=1}^{m}\int_{Q_{j'}^{k+1}}f_i(y)dy)^{\frac{1}{m}}\\
&\leq&(\frac{1}{a^{k+1}})^{\frac{1}{m}}(\prod\limits_{i=1}^{m}\int_{Q_{j}^{k}}f_i(y)dy)^{\frac{1}{m}}\\
&\leq&(\frac{a^k2^{mn}}{a^{k+1}})^{\frac{1}{m}}|Q_{j}^{k}|=\frac{1}{2}|Q_{j}^{k}|.
\end{eqnarray*}
\end{proof}

\begin{proof}[Proof of Theorem \ref{theorem_Ap}] We shall follow the scheme: $\eqref{theorem_Ap_3}\Leftrightarrow\eqref{theorem_Ap_1},$ $\eqref{theorem_Ap_2}\Leftrightarrow\eqref{theorem_Ap_21}$ and $\eqref{theorem_Ap_2}\Rightarrow\eqref{theorem_Ap_1}\Rightarrow\eqref{theorem_Ap_21}.$
Obviously, the equivalence  $\eqref{theorem_Ap_2}\Leftrightarrow\eqref{theorem_Ap_21}$ is trivial.

\eqref{theorem_Ap_3}$\Rightarrow$\eqref{theorem_Ap_1} For any cube $Q$ in $\mathbb{R}^n,$
it follows from H\"{o}lder's inequality and \eqref{theorem_Ap_3} that
\begin{eqnarray*}           &&\mu(\widetilde{Q})^{\frac{1}{p}}\prod\limits_{i=1}^{m}\left(\frac{1}{|Q|}\int_Q f_i(x)dx\right)\\
              &\leq&\mu(\widetilde{Q})^{\frac{1}{p}}\prod\limits_{i=1}^{m}\left(\frac{1}{|Q|}\int_Q f_i^{p_i}(x)\omega_i(x) dx\right)^{\frac{1}{p_i}}
                      \left(\frac{1}{|Q|}\int_Q\omega_i^{-\frac{p'_i}{p_i}}(x)dx\right)^{\frac{1}{p'_i}}\\
              &=&\prod\limits_{i=1}^{m}\left(\int_Q f_i^{p_i}(x)\omega_i(x) dx\right)^{\frac{1}{p_i}}\\
                      &\ &\left(\left(\frac{\mu(\widetilde{Q})}{|Q|}\right)^{\frac{1}{p}}\prod\limits_{i=1}^{m}\left(\frac{1}{|Q|}
                      \int_Q\omega_i^{-\frac{1}{p_i-1}}(x)dx\right)^{\frac{1}{p'_i}}\right)\\
              &\leq& [\mu,\overrightarrow{\omega}]_{A'_{\overrightarrow{p}}}\prod\limits^{m}_{i=1}\|f_i\chi_Q\|_{L^{p_i}(\mathbb{R}^n,\omega_i)}. \end{eqnarray*}
Then $[\mu,\overrightarrow{\omega}]'_{A'_{\overrightarrow{p}}}\leq[\mu,\overrightarrow{\omega}]_{A'_{\overrightarrow{p}}}.$

$\eqref{theorem_Ap_1}\Rightarrow\eqref{theorem_Ap_3}$ Let $Q$ be any cube in $\mathbb{R}^n.$ For $f_i=\omega_i^{-\frac{1}{p_i-1}}\chi_Q,$ we have
\begin{eqnarray*}
 &&\left(\frac{\mu(\widetilde{Q})}{|Q|}\right)^{\frac{1}{p}}\prod\limits_{i=1}^{m}\frac{1}{|Q|}
                      \int_Q\omega_i^{-\frac{1}{p_i-1}}(x)dx\\
       &\leq&[\mu,\overrightarrow{\omega}]'_{A'_{\overrightarrow{p}}}(\frac{1}{|Q|})^{\frac{1}{p}}\prod\limits_{i=1}^{m}
                 \left(\int_Q\omega_i^{-\frac{1}{p_i-1}}(x)dx\right)^{\frac{1}{p_i}}\\
       &=&[\mu,\overrightarrow{\omega}]'_{A'_{\overrightarrow{p}}}\prod\limits_{i=1}^{m}
                 \left(\frac{1}{|Q|}\int_Q\omega_i^{-\frac{1}{p_i-1}}(x)dx\right)^{\frac{1}{p_i}}.
\end{eqnarray*}
It follows that
\begin{eqnarray*}\left(\frac{\mu(\widetilde{Q})}{|Q|}\right)^{\frac{1}{p}}
                  \prod\limits_{i=1}^{m}\left(\frac{1}{|Q|}\int_Q\omega_i^{-\frac{1}{p_i-1}}(x)dx\right)^{\frac{1}{p'_i}}
        \leq [\mu,\overrightarrow{\omega}]'_{A'_{\overrightarrow{p}}}.\end{eqnarray*}
Then $[\mu,\overrightarrow{\omega}]_{A'_{\overrightarrow{p}}}\leq[\mu,\overrightarrow{\omega}]'_{A'_{\overrightarrow{p}}}.$

$\eqref{theorem_Ap_2}\Rightarrow\eqref{theorem_Ap_1}$ Let $Q$ be any cube in $\mathbb{R}^n.$ For $(x,t)\in \widetilde{Q},$ we have
$$\prod\limits_{i=1}^{m}\left(\frac{1}{|Q|}\int_Qf_i(y)dy\right)\leq\mathfrak{M}(\overrightarrow{f\chi_Q})(x,t).$$
It follows from $\eqref{theorem_Ap_2}$ that
\begin{multline*} \prod\limits_{i=1}^{m}\left(\frac{1}{|Q|}\int_Q f_i(y)dy\right)\mu(\widetilde{Q})^{\frac{1}{p}}\\
       \leq\lambda \mu\left(\{(x,t)\in\mathbb{R}_+^{n+1}:\mathfrak{M}(\overrightarrow{f\chi_Q})\geq\lambda\}\right)^{\frac{1}{p}}
       \leq \|\mathfrak{M}\|'\prod\limits^{m}_{i=1}\|f_i\chi_Q\|_{L^{p_i}(\mathbb{R}^n,\omega_i)},\end{multline*}
where $\displaystyle \lambda=\prod\limits_{i=1}^{m}\left(\frac{1}{|Q|}\int_Q f_i(y)dy\right).$

\eqref{theorem_Ap_1}$\Rightarrow$\eqref{theorem_Ap_21}
Without loss of generality, it suffices to prove that
$$
  \|\mathfrak{M}^{\mathfrak{D}}(\overrightarrow{f})\|_{L^{p,\infty}(\mathbb{R}_+^{n+1},\mu)}
  \lesssim[\mu,\overrightarrow{\omega}]'_{A'_{\overrightarrow{p}}}\prod_{i=1}^m \|f_i\|_{L^{p_i}(\mathbb{R}^n,\omega_i)}.
$$
for a general dyadic grid $\mathfrak{D}.$
Then
$\|\mathfrak{M}\|\lesssim[v,\overrightarrow{\omega}]'_{A'_{\overrightarrow{p}}}.$

Fixing $\lambda>0,$ let $k$ be the only integer such that $a^k\leq\lambda<a^{k+1},$ where $a=2^{m(n+1)}.$
It follows from Lemma \ref{Lemma:CZ} that
\begin{eqnarray*}
             \lambda^p \mu\big(\{(x,t)\in \mathbb{R}_+^{n+1}:\mathfrak{M}^{\mathfrak{D}}(\overrightarrow{f})>\lambda\}\big)
             &\leq&(a^{k+1})^p \mu\big(\{(x,t)\in \mathbb{R}_+^{n+1}:\mathfrak{M}^{\mathfrak{D}}(\overrightarrow{f})>a^k\}\big)\\
             &=&(a^{k+1})^p \mu\big(\bigcup_{j\in J_{k}}\widetilde{Q_{j}^{k}}\big)\\
             &=&a^p\sum\limits_{j\in J_{k}}a^{kp}\mu(\widetilde{Q_{j}^{k}})\\
             &\leq&a^p\sum\limits_{j\in J_{k}}\mu(\widetilde{Q_{j}^{k}})\Big(\prod\limits_{i=1}^{m}\frac{1}{|Q_{j}^{k}|}\int_{Q_{j}^{k}}f_i(y)dy\Big)^p.
\end{eqnarray*}
Using \eqref{theorem_Ap_1} and H\"{o}lder's inequality, we get
\begin{eqnarray*}
             &&\lambda^p \mu\big(\{(x,t)\in \mathbb{R}_+^{n+1}:\mathfrak{M}^{\mathfrak{D}}(\overrightarrow{f})>\lambda\}\big)\\
             &\leq&a^p([\mu,\overrightarrow{\omega}]'_{A'_{\overrightarrow{p}}})^p
                   \sum\limits_{j\in J_{k}}\prod\limits^{m}_{i=1}\|f_i\chi_{Q_{j}^{k}}\|_{L^{p_i}(\omega_i)}^p\\
             &\leq&a^p([\mu,\overrightarrow{\omega}]'_{A'_{\overrightarrow{p}}})^p\prod\limits^{m}_{i=1}
             \big(\sum\limits_{j\in J_{k}}\int_{Q_{j}^{k}}f_i^{p_i}(x)\omega_i(x)dx\big)^{\frac{p}{p_i}}\\
             &\leq&a^p\big([\mu,\overrightarrow{\omega}]'_{A'_{\overrightarrow{p}}}\prod\limits^{m}_{i=1}\|f_i\|_{L^{p_i}(\omega_i)}\big)^p.
\end{eqnarray*}
\end{proof}

\begin{proof}[Proof of Theorem \ref{thm:m}]
It suffices to prove that
$$
  \|\mathfrak{M}^{\mathfrak{D}}(\overrightarrow{f\sigma})\|_{L^p(\mathbb{R}_+^{n+1},\mu)}\lesssim [\mu,v_{\overrightarrow{w}}]^{1/p}_{ C_0}
[\overrightarrow{w}]^{\bar{p}}_{A'_{\overrightarrow P}}\prod_{i=1}^m \|f_i\|_{L^{p_i}(\mathbb{R}^n,\sigma_i)}.
$$
for a general dyadic grid $\mathfrak{D}$, and $\overrightarrow{f\sigma}=(f_1\sigma_1,\ldots,f_m\sigma_m).$

Without loss of generality, let $p_1=\min\{p_1,\cdots,p_m\}.$ Let $a=2^{m(n+1)}.$ It follows from Remark \ref{decop} that
\begin{eqnarray*}
\lefteqn{\int_{\mathbb{R}_+^{n+1}}\mathfrak{M}^{\mathfrak{D}}(\overrightarrow{f\sigma})^pd\mu \leq a^p\sum_{Q\in \mathfrak{S}} \prod_{i=1}^m\Big(\frac{1}{|Q|}\int_Q f_i(x)\sigma_i(x)dx\Big)^p\mu(\widetilde{Q})}\\
&=&a^p\sum_{Q\in \mathfrak{S}}\frac{\mu(\widetilde{Q})^{p_1'}\prod_{i=1}^m\sigma_i(Q)^{pp_1'/{p_i'}}}{|Q|^{mpp_1'}}
         \bigg( \prod_{i=1}^m \int_{Q}f_i(x)\sigma_i(x)dx \bigg)^p\\
&&\quad \frac{|Q|^{mp(p_1'-1)}}{\mu(\widetilde{Q})^{p_1'-1}\prod_{i=1}^m\sigma_i(Q)^{pp_1'/{p_i'}}}\\
&\leq& a^p[\overrightarrow{w}]_{A_{\overrightarrow{P}}}^{pp_1'}\sum_{Q\in \mathfrak{S}}\frac{2^{mp(p_1'-1)}|E_Q|^{mp(p_1'-1)}}{\mu(\widetilde{Q})^{p_1'-1}\prod_{i=1}^m\sigma_i(Q)^{pp_1'/{p_i'}}}\bigg( \prod_{i=1}^m \int_{Q}f_i(x)\sigma_i(x)dx \bigg)^p.
\end{eqnarray*}
By H\"{o}lder's inequality, we have
\begin{eqnarray*}
|E_Q|&=&\int_{E_Q}v_{\overrightarrow{w}}^{\frac{1}{mp}}(x)\sigma_1^{\frac{1}{mp_1'}}(x)\cdots\sigma_m^{\frac{1}{mp_m'}}(x)dx\label{eq:h} \\
&\le& v_{\overrightarrow{w}}(E_Q)^{\frac{1}{mp}}\sigma_1(E_Q)^{\frac{1}{mp_1'}}\cdots\sigma_m(E_Q)^{\frac{1}{mp_m'}}\nonumber.
\end{eqnarray*}
Therefore,
\[
  |E_Q|^{mp(p_1'-1)}\le v_{\overrightarrow{w}}(E_Q)^{p_1'-1}\sigma_1(E_Q)^{\frac{p(p_1'-1)}{p_1'}}\cdots\sigma_m(E_Q)^{\frac{p(p_1'-1)}{p_m'}}
\]
and
\[
  \frac{p(p_1'-1)}{p_i'}-\frac{p}{p_i}=\frac{pp_1'}{p_i'}-p\ge 0.
\]
Since $E_Q\subset Q$, we have
\[
  v_{\overrightarrow{w}}(E_Q)^{p_1'-1}\leq v_{\overrightarrow{w}}(Q)^{p_1'-1}\leq [\mu,v_{\overrightarrow{w}}]_{ C_0}\mu(\widetilde{Q})^{p_1'-1}
\]
and hence
\[
  \sigma_i(E_Q)^{\frac{p(p_1'-1)}{p_i'}-\frac{p}{p_i}}\le  \sigma_i(Q)^{\frac{pp_1'}{p_i'}-p},\quad i=1,\cdots,m.
\]
It follows that
\begin{eqnarray*}
\lefteqn{\sum_{Q\in \mathfrak{S}}\frac{|E_Q|^{mp(p_1'-1)}}{\mu(\widetilde{Q})^{p_1'-1}\prod_{i=1}^m\sigma_i(Q)^{pp_1'/{p_i'}}}
\bigg( \prod_{i=1}^m \int_{Q}f_i(x)\sigma_i(x)dx\bigg)^p}\\
&\leq&[\mu,v_{\overrightarrow{w}}]_{ C_0}\sum_{Q\in \mathfrak{S}}\prod_{i=1}^m
\bigg(  \frac{1}{\sigma_i(Q)}\int_{Q}f_i(x)\sigma_i(x)dx\bigg)^p \sigma_i(E_Q)^{p/{p_i}}\\
&\leq&[\mu,v_{\overrightarrow{w}}]_{ C_0}\prod_{i=1}^m\left( \sum_{Q\in \mathfrak{S}}\bigg(\frac{1}{\sigma_i(Q)}\int_{Q}f_i(x)\sigma_i(x)dx \bigg)^{p_i} \sigma_i(E_{Q})\right)^{p/{p_i}}\\
&\leq&[\mu,v_{\overrightarrow{w}}]_{ C_0}\prod_{i=1}^m\|M_{\sigma_i}^{\mathfrak{D}}(f_i)\|_{L^{p_i}(\mathbb{R}^n,\sigma_i)}^p\lesssim[\mu,v_{\overrightarrow{w}}]_{ C_0}\prod_{i=1}^m\|f_i\|_{L^{p_i}(\mathbb{R}^n,\sigma_i)}^p.
\end{eqnarray*}
Hence
$$
  \|\mathfrak{M}^{\mathfrak{D}}(\overrightarrow{f})\|_{L^p(\mu)}\lesssim [v_{\overrightarrow{w}}, \mu]_{ C_0}^{1/p} [\overrightarrow{w}]_{A_{\overrightarrow{P}}}^{{\bar{p}}}\prod_{i=1}^m \|f_i\|_{L^{p_i}(w_i)}.
$$
This completes the proof.
\end{proof}

\begin{proof}[Proof of Theorem \ref{prop s}]
Let $k\in Z.$ Using \ref{Lemma:CZ}, we have
\begin{eqnarray*}\mu\big(\{(x,t)\in \mathbb{R}_+^{n+1} : \mathfrak{M}^{\mathfrak{D}}\overrightarrow{f}(x,t)>a^k\}\big)
&=&\sum_{j\in J_{k}}\mu(\widetilde{Q_{j}^{k}}).\end{eqnarray*}
It follows from Definition \ref{def_C_infty} that
$$\mu\big(\{(x,t)\in \mathbb{R}_+^{n+1} : \mathfrak{M}^{\mathfrak{D}}\overrightarrow{f}(x,t)>a^k\}\big)
\leq[\mu,v_{\overrightarrow{w}}]_{C_{\infty}}\sum_{j\in J_{k}}v_{\overrightarrow{w}}(Q_{j}^{k}).$$
Because of $\sum_{j\in J_{k}}v_{\overrightarrow{w}}(Q_{j}^{k})=v_{\overrightarrow{w}}(\{\mathfrak{M}^{\mathfrak{D}}\overrightarrow{f}(x)>a^k\}),$ we obtain
\begin{equation}\label{eq mu}\mu\big(\{(x,t)\in \mathbb{R}_+^{n+1} : \mathfrak{M}^{\mathfrak{D}}\overrightarrow{f}(x,t)>a^k\}\big)
\leq[\mu,v_{\overrightarrow{w}}]_{C_{\infty}}v_{\overrightarrow{w}}\big(\{\mathcal{M}^{\mathfrak{D}}\overrightarrow{f}(x)>a^k\}\big).\end{equation}
Then
\begin{eqnarray*}
&&\int_{\mathbb{R}_+^{n+1}}\mathfrak{M}^{\mathfrak{D}}(\overrightarrow{f\sigma})^pd\mu\\
&=&p\int_0^{\infty}\lambda^{p-1}\mu\big(\{(x,t)\in \mathbb{R}_+^{n+1} : \mathfrak{M}^{\mathfrak{D}}\overrightarrow{f}(x,t)>\lambda\}\big)d\lambda\\
&=&p\sum\limits_{k\in \mathbb{Z}}\int_{a^k}^{a^{k+1}}\lambda^{p-1}\mu\big(\{(x,t)\in \mathbb{R}_+^{n+1} : \mathfrak{M}^{\mathfrak{D}}\overrightarrow{f}(x,t)>\lambda\}\big)d\lambda\\
&\leq&p\sum\limits_{k\in \mathbb{Z}}({a^{k+1}}-{a^k})a^{(k+1)(p-1)}\mu\big(\{(x,t)\in \mathbb{R}_+^{n+1} : \mathfrak{M}^{\mathfrak{D}}\overrightarrow{f}(x,t)>a^k\}\big).
\end{eqnarray*}
It follows from \eqref{eq mu}, we have
\begin{eqnarray*}
&&\int_{\mathbb{R}_+^{n+1}}\mathfrak{M}^{\mathfrak{D}}(\overrightarrow{f\sigma})^pd\mu\\
&\leq&p[\mu,v_{\overrightarrow{w}}]_{C_{\infty}}\sum\limits_{k\in \mathbb{Z}}({a^{k+1}}-{a^k})a^{(k+1)(p-1)}v_{\overrightarrow{w}}\big(\{\mathcal{M}^{\mathfrak{D}}\overrightarrow{f}(x)>a^k\}\big)\\
&=&a^{2p-1}p[\mu,v_{\overrightarrow{w}}]_{C_{\infty}}\sum\limits_{k\in \mathbb{Z}}({a^{k}}-{a^{k-1}})a^{(k-1)(p-1)}v_{\overrightarrow{w}}\big(\{\mathcal{M}^{\mathfrak{D}}\overrightarrow{f}(x)>a^k\}\big)\\
&\leq&a^{2p-1}p[\mu,v_{\overrightarrow{w}}]_{C_{\infty}}\sum\limits_{k\in \mathbb{Z}}\int_{a^k}^{a^{k+1}}\lambda^{p-1}v_{\overrightarrow{w}}\big(\{\mathcal{M}^{\mathfrak{D}}\overrightarrow{f}(x)>\lambda\}\big)d\lambda\\
&=&a^{2p-1}[\mu,v_{\overrightarrow{w}}]_{C_{\infty}}\int_{\mathbb{R}^n}\mathcal{M}^{\mathfrak{D}}(\overrightarrow{f})^pv_{\overrightarrow{w}}dx.
\end{eqnarray*}
Recalling that\cite[Theorem 1.2]{Li Moen Sun}
$$ \|\mathcal{M}\|_{\prod_{i=1}^m {L^{p_i}(\mathbb{R}^n,w_i)}\rightarrow L^p(\mathbb{R}^{n},v_{\overrightarrow{w}})}\lesssim
[\overrightarrow{w}]^{\bar{p}}_{A_{\overrightarrow P}},$$ we have
\begin{equation*}
  \|\mathfrak{M}(\overrightarrow{f})\|_{L^p(\mathbb{R}_+^{n+1},\mu)}\lesssim [\mu,v_{\overrightarrow{w}}]^{1/p}_{ C_{\infty}}
[\overrightarrow{w}]^{\bar{p}}_{A_{\overrightarrow P}}\prod_{i=1}^m \|f_i\|_{L^{p_i}(\mathbb{R}^n,w_i)}.
\end{equation*}
\end{proof}

\begin{proof}[Proof of Theorem \ref{theorem_Sp}] It is clear that $(1)\Leftrightarrow(2)\Rightarrow(3)$
without $(v,\overrightarrow{\omega})\in RH_{\overrightarrow{p}},$
so we omit them.

$(3)\Rightarrow(2)$ By Remark \ref{decop}, it
suffices to prove
\begin{equation*}
\|\mathfrak{M}^{\mathfrak{D}}(\overrightarrow{f\sigma})\|_{L^p(\mathbb{R}_+^{n+1},\mu)}
\lesssim[\overrightarrow{\omega}]_{RH_{\overrightarrow{p}}}^{\frac{1}{p}}[v,\overrightarrow{\omega}]_{S_{\overrightarrow{p}}}
\prod\limits^m_{i=1}\|f_i\|_{L^{p_i}(\mathbb{R}^n,\sigma_i)},
\end{equation*}
where $\mathfrak{D}$ is a general dyadic grid. Let $a=2^{m(n+1)}.$ It follows that
\begin{eqnarray*}
   & &\int_{R_+^{n+1}}\mathfrak{M}^{\mathfrak{D}}(\overrightarrow{f\sigma})^p d\mu\\
   &\leq&a^p\sum\limits_{Q\in\mathfrak{S}} \mu(\widehat{E_Q})\big(\prod\limits_{i=1}
      \limits^{m}\frac{1}{|Q|}\int_{Q}f_i(y_i)\sigma_i(y_i)dy_i\big)^p\\
   &=&a^p\sum\limits_{Q\in\mathfrak{S}}\Big(\mu(\widehat{E_Q})\big(\prod\limits_{i=1}
      \limits^{m}\frac{\sigma_i(Q)}{|Q|}\big)^p\Big)\big(\prod\limits_{i=1}
      \limits^{m}\frac{1}{\sigma_i(Q)}\int_{Q}f_i(y_i)\sigma_i(y_i)dy_i\big)^p\\
   &=&a^p\sum\limits_{Q\in \mathfrak{D}}a_Q\big(\prod\limits_{i=1}
      \limits^{m}\frac{1}{\sigma_i(Q)}\int_{Q}f_i(y_i)\sigma_i(y_i)d y_i\big)^p,
\end{eqnarray*}
where
$$
a_Q:=
\begin{cases}
 \mu(\widehat{E_Q})\big(\prod\limits_{i=1}
      \limits^{m}\frac{\sigma_i(Q)}{|Q|}\big)^p, &\hbox{ if } Q\in \mathfrak{S};\\
0,  &\hbox{ if else.}
\end{cases}
$$

Now, we check the assumption (\ref{lem_Carleson_ass}) of Lemma \ref{lem_Carleson_thm}. For $R\in \mathfrak{D},$ we have
\begin{eqnarray}\sum\limits_{Q\subseteq R}a_Q&=&\sum\limits_{Q\in \mathfrak{S},Q\subseteq R}a_Q\nonumber\\
&=&\sum\limits_{Q\in \mathfrak{S},Q\subseteq R}\mu(\widehat{E_Q})\big(\prod\limits_{i=1}
      \limits^{m}\frac{\sigma_i(Q)}{|Q|}\big)^p\nonumber\\
&=&\sum\limits_{Q\in \mathfrak{S},Q\subseteq R}\int_{\widehat{E_Q}}\big(\prod\limits_{i=1}
      \limits^{m}\frac{\sigma_i(Q)}{|Q|}\big)^p d\mu\nonumber.\end{eqnarray}
It follows that
\begin{eqnarray}\sum\limits_{Q\subseteq R}a_Q&\leq&\sum\limits_{Q\in \mathfrak{S},Q\subseteq R}\int_{\widehat{E_Q}}
      \big(\mathfrak{M}^{\mathfrak{D}}(\overrightarrow{\sigma\chi_R})\big)^p(x,t) d\mu\nonumber\\
&\leq&\int_{\widetilde{R}}\big(\mathfrak{M}^{\mathfrak{D}}(\overrightarrow{\sigma\chi_R})\big)^p (x,t)d\mu\nonumber\\
&\leq&[\mu,\overrightarrow{\omega}]_{S'_{\overrightarrow{p}}}^p
\prod\limits^m_{i=1}|R|^{\frac{p}{p_i}}_{\sigma_i}\label{by_Sp_a}\\
&\leq&[\mu,\overrightarrow{\omega}]_{S'_{\overrightarrow{p}}}^p
      [\overrightarrow{\omega}]_{RH_{\overrightarrow{p}}}\int_R
      \prod\limits^m_{i=1}\sigma_i^{\frac{p}{p_i}}(x)dx\label{by_Rh_a}
\end{eqnarray} where conditions $S¡®_{\overrightarrow{p}}$ and $RH_{\overrightarrow{p}}$
are used in (\ref{by_Sp_a}) and (\ref{by_Rh_a}), respectively.
Thus, we obtain (\ref{lem_Carleson_ass}).
It follows from Lemma \ref{lem_Carleson_thm} that\begin{equation*}
\|\mathfrak{M}^{\mathfrak{D}}(\overrightarrow{f\sigma})\|_{L^p(\mathbb{R}_+^{n+1},\mu)}
\lesssim[\overrightarrow{\omega}]_{RH_{\overrightarrow{p}}}^{\frac{1}{p}}[v,\overrightarrow{\omega}]_{S¡¯_{\overrightarrow{p}}}
\prod\limits^m_{i=1}\|f_i\|_{L^{p_i}(\mathbb{R}^n,\sigma_i)}.
\end{equation*}
\end{proof}

\begin{proof}[Proof of Theorem \ref{theorem_Bp}] Because of $(1)\Longleftrightarrow(2),$ we only prove that $(2)$ is valid.
As we discussed in Theorem \ref{theorem_Sp}, we have
\begin{eqnarray}
   &~&\int_{\mathbb{R}^{n+1}_+}\mathfrak{M}^{\mathfrak{D}}(\overrightarrow{f\sigma})^p d\mu
         \leq a^p\sum\limits_{Q\in\mathfrak{S}}a^{kp}\mu(\widetilde{Q})\nonumber\\
   &\leq&a^p\sum\limits_{Q\in\mathfrak{S}} \mu(\widetilde{Q})\big(\prod\limits_{i=1}
      \limits^{m}\frac{1}{|Q|}\int_{Q}f_i(x)\sigma_i(x)dx\big)^p\nonumber\\
   &=&a^p\sum\limits_{Q\in\mathfrak{S}}\Big(\mu(\widetilde{Q})\big(\prod\limits_{i=1}
      \limits^{m}\frac{\sigma_i(Q)}{|Q|}\big)^p\Big)\big(\prod\limits_{i=1}
      \limits^{m}\frac{1}{\sigma_i(Q)}\int_{Q}f_i(x)\sigma_i(x)dx\big)^p\nonumber\\
   &=&a^p\sum\limits_{Q\in  \mathfrak{D}}a_Q
      \big(\prod\limits_{i=1}\limits^{m}\frac{1}{\sigma_i(Q)}\int_{Q}f_i(x)\sigma_i(x)dx\big)^p,\nonumber
   \end{eqnarray}
where
$$
a_Q:=
\begin{cases}
 \mu(\widetilde{Q})\big(\prod\limits_{i=1}
      \limits^{m}\frac{\sigma_i(Q)}{|Q|}\big)^p, &\hbox{ if } Q\in\mathfrak{S};\\
0,  &\hbox{ if else.}
\end{cases}
$$

Now, we check the assumption (\ref{lem_Carleson_ass}) of Lemma \ref{lem_Carleson_thm}.
Let $R\in \mathfrak{D},$ we have
\begin{eqnarray*}\sum\limits_{Q\subseteq R}a_Q&=&\sum\limits_{Q\in \mathfrak{S},Q\subseteq R}a_Q\\
&=&\sum\limits_{Q\in \mathfrak{S},Q\subseteq R}\mu(\widetilde{Q})\big(\prod\limits_{i=1}
      \limits^{m}\frac{\sigma_i(Q)}{|Q|}\big)^p\\
&\leq&[\mu,\overrightarrow{\omega}]^p_{B'_{\overrightarrow{p}}}
      \sum\limits_{Q\in \mathfrak{S},Q\subseteq R}
      \Big(|Q|\exp\big(\frac{1}{|Q|}\int_{Q}\ln\prod\limits^m_{i=1}\sigma_i^{\frac{p}{p_i}}(x)dx\big)\Big),
\end{eqnarray*}
where condition $B'_{\overrightarrow{p}}$
is used. Then
\begin{eqnarray*}\sum\limits_{Q\subseteq R}a_Q
&\leq&2[\mu,\overrightarrow{\omega}]^p_{B'_{\overrightarrow{p}}}\sum\limits_{Q\in \mathfrak{S},Q\subseteq R}|E_Q|\exp\big(\frac{1}{|Q|}
      \int_{Q}\ln\prod\limits^m_{i=1}\sigma_i^{\frac{p}{p_i}}(x)dx\big)\\
&\leq&2[\mu,\overrightarrow{\omega}]^p_{B'_{\overrightarrow{p}}}\sum\limits_{Q\in \mathfrak{S},Q\subseteq R}
      \int_{E_Q}G(\prod\limits^m_{i=1}\sigma_i^{\frac{p}{p_i}}\chi_R)(x)dx\\
&\leq&2[\mu,\overrightarrow{\omega}]^p_{B'_{\overrightarrow{p}}}
      \int_{\mathbb{R}^n}G(\prod\limits^m_{i=1}\sigma_i^{\frac{p}{p_i}}\chi_R)(x)dx.\end{eqnarray*}
It follows from the boundedness of $G$ (see \cite{T. Hytonen}) that
$$\sum\limits_{Q\subseteq R}a_Q\leq 2e[\mu,\overrightarrow{\omega}]^p_{B'_{\overrightarrow{p}}}\int_{R}\prod\limits^m_{i=1}\sigma_i^{\frac{p}{p_i}}(x)dx.$$
Using Lemma \ref{lem_Carleson_thm} and Remark \ref{decop}, we get
\begin{equation*}
\|\mathfrak{M}(\overrightarrow{f\sigma})\|_{L^p(\mathbb{R}_+^{n+1},\mu)}\lesssim
[\mu,\overrightarrow{\omega}]_{B'_{\overrightarrow{p}}}
\prod\limits^m_{i=1}\|f_i\|_{L^{p_i}(\mathbb{R}^n,\sigma_i)}.
\end{equation*}
\end{proof}

\begin{proof} [Proof of Theorem \ref{theorem_bi A Fujii}]
This proof is similar to one of Theorem \ref{theorem_Bp}.
For $\{a_Q\}_{Q\in\mathfrak{D}}$ defined in the proof of Theorem \ref{theorem_Bp}, it suffices to check that
$$\sum\limits_{Q\subseteq R}a_Q\leq 2[\mu,\overrightarrow{\omega}]^p_{A'_{\overrightarrow{p}}}
[\overrightarrow{\omega}]_{W_{\overrightarrow{p}}^\infty} \int_{R}\prod\limits^m_{i=1}\sigma_i(x)dx,~R\in\mathfrak{D}.$$
Indeed, for $R\in\mathfrak{D},$ it follows from the definitions of $A'_{\overrightarrow{p}}$ that
\begin{eqnarray*}\sum\limits_{Q\subseteq R}a_Q
&=&\sum\limits_{Q\in \mathfrak{S},Q\subseteq R}\mu(\widetilde{Q})\big(\prod\limits_{i=1}
      \limits^{m}\frac{\sigma_i(Q)}{|Q|}\big)^p\\
&\leq&[\mu,\overrightarrow{\omega}]^p_{A'_{\overrightarrow{p}}}
      \sum\limits_{Q\in \mathfrak{S},Q\subseteq R}\prod\limits_{i=1}
      \limits^{m}\big(\sigma_i(Q)\big)^{\frac{p}{p_i}}\\
&=&[\mu,\overrightarrow{\omega}]^p_{A'_{\overrightarrow{p}}}\sum\limits_{Q\in \mathfrak{S},Q\subseteq R}\Big(\prod\limits_{i=1}
      \limits^{m}\big(\frac{\sigma_i(Q)}{|Q|}\big)^{\frac{p}{p_i}}\Big)|Q|.
\end{eqnarray*}
It follows that
\begin{eqnarray*}\sum\limits_{Q\subseteq R}a_Q
&\leq&2[\mu,\overrightarrow{\omega}]^p_{A'_{\overrightarrow{p}}}\sum\limits_{Q\in \mathfrak{S},Q\subseteq R}|E_Q|\Big(\prod\limits_{i=1}
      \limits^{m}\big(\frac{\sigma_i(Q)}{|Q|}\big)^{\frac{p}{p_i}}\Big)\\
&\leq&2[\mu,\overrightarrow{\omega}]^p_{A'_{\overrightarrow{p}}}\sum\limits_{Q\in \mathfrak{S},Q\subseteq R}
      \int_{E_Q}\prod\limits^m_{i=1}M(\sigma_i\chi_R)^{\frac{p}{p_i}}(x)dx\\
&\leq&2[\mu,\overrightarrow{\omega}]^p_{A'_{\overrightarrow{p}}}\int_{R}\prod\limits^m_{i=1}M(\sigma_i\chi_R)^{\frac{p}{p_i}}(x)dx\\
&\leq&2[\mu,\overrightarrow{\omega}]^p_{A'_{\overrightarrow{p}}}[\overrightarrow{\omega}]_{W_{\overrightarrow{p}}^\infty}
\int_{R}\prod\limits^m_{i=1}\sigma_i^{\frac{p}{p_i}}(x)dx.
\end{eqnarray*}
\end{proof}

\bigskip
\noindent{\bf Acknowledgements} Wei Chen is supported by
the Natural Science Foundation of Jiangsu Province (Grant No. BK20161326)
and the School Foundation of Yangzhou University (2016CXJ001).

\end{document}